\newtheorem{theorem}{Theorem}[section]
\newtheorem{algorithm}[theorem]{Algorithm}
\newtheorem{example}[theorem]{Example}
\newtheorem{proposition}[theorem]{Proposition}
\newtheorem{remark}[theorem]{Remark}
\newtheorem{hyp}[theorem]{Assumption}
\newcommand{\done}[1]{#1}
\newcommand{\RR}{\mathbb{R}}
\newcommand{\Prob}{\mathbb{P}}
\newcommand{\Exp}{\mathbb{E}}
\newcommand{\sL}{\mathrm{L}}
\newcommand{\calA}{{\cal A}}
\newcommand{\calF}{{\cal F}}
\newcommand{\calC}{{\cal C}}
\newcommand{\calM}{{\cal M}}
\newcommand{\calU}{{\cal U}}
\begin{document}

\title{A probabilistic reduced basis method for parameter-dependent problems}
\author{Marie Billaud-Friess\footnote{Centrale Nantes, Nantes Universit\'e, LMJL UMR CNRS 6629, Nantes, France}, Arthur Macherey$^{*,\dagger}$, Anthony Nouy$^*$, Cl\'ementine Prieur\footnote{Univ. Grenoble Alpes, CNRS, Inria, Grenoble INP*, LJK, 38000 Grenoble, France
* Institute of Engineering Univ. Grenoble Alpes}}

\date{\today}
\maketitle

\begin{abstract}
Probabilistic variants of Model Order Reduction (MOR) methods have recently emerged for improving stability and computational performance of classical approaches. In this paper, we propose a probabilistic Reduced Basis Method (RBM) for the approximation of a family of parameter-dependent functions. It relies on a probabilistic greedy algorithm with an error indicator that can be written as an expectation of some parameter-dependent random variable. Practical algorithms relying on Monte Carlo estimates of this error indicator are discussed. In particular, when using Probably Approximately Correct  (PAC) bandit algorithm, the resulting procedure is proven to be a weak greedy algorithm with high probability. Intended applications concern the approximation of a parameter-dependent family of functions for which we only have access to (noisy) pointwise evaluations. As a particular application, we consider the approximation of solution manifolds of  
 linear parameter-dependent partial differential equations with a  probabilistic interpretation through the Feynman-Kac formula.
\end{abstract}

{\small \noindent\textbf{Keywords:} Reduced basis method, probabilistic greedy algorithm, parameter-dependent partial differential equation, Feynman-Kac formula }\\

{\small \noindent\textbf{2010 AMS Subject Classifications:}  65N75, 65D15 }



\section{Introduction}\label{sec1}

This article focuses on the approximation of a family of functions $\calM = \{u(\xi) : \xi \in \Xi\}$  indexed by a parameter $\xi $, each function $u(\xi)$ being an element of some high-dimensional vector space $V$.
The functions $u(\xi)$ can be known a priori, or implicitly given through parameter-dependent equations. In multi-query contexts such as optimization, control or uncertainty quantification, one is interested in computing $u(\xi)$  for many instances of the parameter. For complex numerical models, this can be computationally intractable. Model order reduction (MOR) methods aim at providing an approximation $u_n(\xi)$ of $u(\xi)$ which can be evaluated efficiently for any $\xi$ in the parameter set $\Xi$. 
For linear approximation methods, an approximation $u_n(\xi)$ is obtained  by means of a projection onto a low-dimensional subspace $V_n$ which is chosen to approximate at best $ \mathcal{M}$, uniformly over $\Xi$  
for  empirical interpolation method (EIM) or reduced basis method (RBM), or in mean-square sense for proper orthogonal decomposition (POD) or proper generalized decomposition (PGD)  methods (see, e.g., the survey \cite{Nouy2017Jun}) .\\

Probabilistic variants of MOR methods have been recently proposed  for  improving stability and  computational performance of classical MOR methods. In \cite{Cohen2020}, the authors introduced a probabilistic greedy algorithm for the construction of reduced spaces $V_n$, which uses different training sets in $\Xi$ with moderate cardinality, randomly chosen at each iteration, that allows a sparse exploration of a possibly high-dimensional parameter set. 
In \cite{Cai2022Mar}, the authors derive a similar probabilistic EIM using sequential sampling in $\Xi$, which provides an interpolation with a prescribed precision with high probability. Let us also mention that a control variate method using a reduced basis paradigm has been proposed in \cite{Boyaval2010} for Monte Carlo (MC) estimation of the expectation of a collection of random variables $u(\xi)$ in a space $V$ of second-order random variables. A greedy algorithm is introduced to select a subspace $V_n$ of random variables, that relies on a statistical estimation of the projection error. This algorithm has been analyzed in \cite{Blel2021} and proven to be a weak greedy algorithm with high probability. Probabilistic approaches have also been introduced for providing efficient and numerically stable   error estimates for reduced order models \cite{Homescu2007Jun,janon2018goal,Smetana2020Dec,Smetana2019Mar}.  In  \cite{Balabanov2019Dec,Balabanov2021Mar,Balabanov2021preconditioners,Saibaba2020May,Zahm2016Apr}, 
random sketching methods have been systematically used in different tasks of projection-based model order reduction, including the construction of reduced spaces or libraries of reduced spaces, the projection onto these spaces, the error estimation and preconditioning. \\

Here, we consider the problem of computing an approximation $u_n$ of $u$ within a reduced basis framework. The reduced basis method performs in two steps, {\it offline} and {\it online}.  During the offline stage, a reduced space $V_n$ is generated  from snapshots $u(\xi_i)$ at parameter values $\xi_i$ greedily selected by maximizing over $\Xi$ (or some subset of $\Xi$) an error indicator $\Delta(u_{n-1}(\xi),\xi)$ which provides a measure of the discrepancy between $ u(\xi) $ and $u_{n-1}(\xi) $. 
Then, during the online step, $u_n(\xi)$ is obtained by some projection onto $V_n$.  
\\
In this paper, we propose a probabilistic  greedy algorithm for which  $\Delta(u_n(\xi),\xi)$  is the square error norm $\Vert u (\xi) - u_n(\xi) \Vert_V^2$, 
expressed as the expectation of some parameter-dependent random variable $Z_n(\xi)$,
\begin{equation}
\Delta(u_n(\xi),\xi) = \mathbb{E}(Z_n(\xi)).
\label{eq:errorV}
\end{equation}
\done{For maximizing $\mathbb{E}(Z_n(\xi))$ we rely on MC estimates. We consider either a naive MC approach with fixed number of samples or  a PAC (Probably Approximately Correct) bandit algorithm proposed  by the authors in \cite{Billaud22Jun} based on adaptive sampling. } The algorithm only requires a limited number of samples  by preferably sampling random variables associated with a probable maximizer $\xi$. It is particularly suitable for applications 
where the random variable $Z_n(\xi)$ is costly to sample.   
Under suitable assumptions on the distribution of $Z_n(\xi)$, it provides a PAC maximizer in relative precision, meaning that with high probability the parameter $\xi$ is a quasi-optimal solution of the optimization problem. We prove in this work that the resulting greedy algorithm is a weak-greedy algorithm with high probability.\\

Intended applications concern the approximation of a parameter-dependent family of functions $u(\xi)$ defined on a bounded domain $D$ for which we have access to (possibly noisy) pointwise evaluations $u(\xi)(x) := u(x,\xi)$ for any $x \in D$. The proposed probabilistic greedy algorithm can be used to generate a sequence of spaces $V_n$ and corresponding interpolations $u_n$ of $u$ onto $V_n$. Assuming $u(\xi) \in L^2(D)$ and we have a direct access to pointwise evaluations, the square error norm $\Vert u(\xi) - u_n(\xi) \Vert_{L^2(D)}^2$ used to select the parameter $\xi$ can be estimated  from samples of $Z_n(\xi) = \vert D \vert  \vert u(Y,\xi)-u_n(Y,\xi)\vert^2$ with $Y$ a uniform random variable over $D$. It results in a probabilistic EIM in the spirit of \cite{Cai2022Mar}.  In a fully discrete setting where $\Xi$ and $D$ are finite sets, $u$ can be identified with a matrix and the proposed algorithm is a probabilistic version of adaptive cross approximation for low-rank matrix approximation \cite{bebendorf2000approximation,Tyrtyshnikov:2000tk}, with a particular column-selection strategy. 
Another context  is the solution of a linear parameter-dependent partial differential equation (PDE) defined on a bounded  domain $D$ and whose solution $u(\xi)$ admits a probabilistic representation through the Feynman-Kac formula. This allows to express a pointwise evaluation $u(x,\xi)$  as the expectation of a functional of some stochastic process. The problem being linear, the error $u(\xi)-u_n(\xi)$ also admits a Feynman-Kac representation, which again allows to express the square error norm $\Delta(u_n(\xi),\xi)  = \Vert u (\xi) - u_n(\xi) \Vert_{L^2(D)}^2$ as the expectation of some random variable $Z_n(\xi)$ and to estimate it through Monte-Carlo simulations of stochastic processes. This is a natural framework to apply the proposed probabilistic greedy algorithm, which allows a direct estimation of the targeted error norm and avoids the use of  possibly highly biased residual based error estimates. \done{This leads to error estimators with better effectivity, which improves the behavior of weak-greedy algorithms}. In practice, as the exact solution of the PDE is not available, the snapshots used for generating the reduced space $V_n$ are numerical approximations computed from pointwise evaluations of the exact solution $u(\xi)$ by some interpolation or learning procedure. This results in a fully probabilistic setting which opens the route for the solution of high-dimensional PDEs (see, e.g. \cite{Billaud-Friess2020May} where the authors rely on interpolation on sparse polynomial spaces).       
  \\

This paper is structured as follows. In Section \ref{sec:reduced basisM}  we recall basic facts concerning reduced basis method. Then in Section \ref{sec:PGA} we present and analyze our new probabilistic greedy algorithm. Based on this algorithm, we derive  in Section \ref{sec:probaPDE} a new reduced basis method for parameter-dependent PDEs with probabilistic interpretation. Numerical results illustrating the performance of the proposed approaches are presented in Section \ref{sec:numres}.

\section{Reduced basis greedy algorithms} \label{sec:reduced basisM} 

As discussed in the introduction, reduced basis method relies on two steps. We mainly focus on the offline stage during which the reduced subspace $V_n\subset V$ is constructed. In particular, we recall in this section some basic facts concerning greedy algorithms usually considered in that context. For detailed overview on that topic see, e.g., surveys \cite{Haasdonk2017,Nouy2017morbook}.\\

Throughout this paper, $V$ is some Hilbert space equipped with a norm $\|\cdot\|_V$. We seek an approximation $u_n(\xi)$ of $u(\xi)$ in a low-dimensional space $V_n$ which 
is designed  to well approximate the solution manifold  
$$\calM = \{u(\xi) : \xi \in \Xi\}.$$
A benchmark for optimal linear approximation is given by 
the Kolmogorov $n$-width
$$
d_n(\calM)_V := \inf_{ \dim V_n = n} \sup_{u \in \calM} \|u - P_{V_n} u \|_V,
$$
where the infimum is taken over all $n$-dimensional subspaces $V_n$ of $V$ and where $ P_{V_{n}}$ stands for the orthogonal projection onto $V_{n}$. 
However, an optimal space $V_n$ is in general out of reach. 
A  prominent approach  is to rely on a greedy algorithm for generating a sequence of spaces 
from suitably selected parameter values.  
Starting from $V_0=\{0\}$, the $n$-th step of this algorithm reads as follows. Given $\{\xi_1, \dots, \xi_{n-1}\} \subset \Xi$ and the corresponding subspace 
$$
V_{n-1}= \mathrm{span}  \{u(\xi_1), \dots, u(\xi_{n-1}) \},
$$
a new parameter value $\xi_{n}$ is selected as
\begin{equation}
\|u(\xi_n) - u_{n-1}(\xi_n)\|_V = \sup_{\xi \in \Xi} \|u(\xi) - u_{n-1}(\xi)\|_V,
\label{eq:optimal}
\end{equation}
where $ u_{n-1}$ stands for an approximation of $u(\xi)$ in $V_{n-1}$. 
However, this ideal algorithm is still unfeasible in practice, at least for the two following reasons:
\begin{enumerate}[1)]
\item  computing the error $\|u(\xi) - u_{n-1}(\xi)\|_V$ for all $\xi \in \Xi$ may be unfeasible in practice (e.g. when $u(\xi)$ is only given by some parameter-dependent equation), and
\item maximizing this error over $\Xi$ is a non trivial optimization problem.
\end{enumerate}

Point 1) is usually tackled by  selecting a parameter $\xi_n$ which  maximizes some surrogate error indicator $\Delta(u_{n-1}(\xi),\xi)$ that can be easily estimated. 
Assuming $u_n(\xi)$ is a quasi-optimal projection of $u(\xi)$ onto $V_n$ and assuming 
$\Delta(u_{n-1}(\xi),\xi)$ is equivalent to $\|u(\xi) - u_{n-1}(\xi)\|_V$, 
 there exists $\gamma \in (0,1]$ such that  
\begin{equation}
\|u(\xi_n) -P_{V_{n-1}}u(\xi_n)\|_V \ge \gamma \sup_{\xi \in \Xi} \|u(\xi) -P_{V_{n-1}}u(\xi)\|_V,
\label{eq:suboptimal}
\end{equation}
which yields a weak-greedy algorithm.  Quasi-optimality means that
the approximation $u_n$ of $u$ in $V_n$ satisfies
\begin{equation}
\| u(\xi) - u_n(\xi) \|_{V} \le C \| u(\xi) - P_{V_n}u(\xi) \|_{V}
\label{eq:quasioptn}
\end{equation}
for some constant $C$ independent from $V_n$ and $\xi$.
Although the generated sequence $V_n$ is not optimal, it has been proven in \cite{Binev2011Jun,Buffa2012,Devore2013} that the approximation error 
$$
\sigma_n(\calM)_V:= \sup_{u \in \calM} \|u-P_{V_n}u\|_V
$$
has the same type of decay as the benchmark $d_n(\calM)_V$ for algebraic or exponential convergence. 

\begin{remark}\label{eq:parameq} In the case of parameter-dependent linear equation arising e.g., from the discretization of some parameter-dependent linear PDE of the form $r(u(\xi),\xi)=0$ with $ u(\xi) \in V = \RR^N$, the approximation $u_n(\xi)$ is typically obtained through some (Petrov-)Galerkin projection onto $V_n$, with a complexity depending on $n \ll N$. In such a context, a weak greedy algorithm classically involves a certified residual based error estimate $\Delta(u_n(\xi),\xi)$, that is an upper bound of the true error. However, for some applications, such an error estimate can be pessimistic (when the underlying discrete operator is badly conditionned) so that the generated sequence  $V_n$ is far from being optimal. 
A possible strategy to improve such an estimate is to consider a preconditioned residual  \cite{Cohen2012Sep,Zahm2016Apr,Balabanov2021preconditioners}.
In Section \ref{sec:probaPDE}, we overcome this limitation by considering for $\Delta(u_n(\xi),\xi)$ the targeted  square error norm $\Vert u(\xi) - u_n(\xi) \Vert_{V}^2$, which is evaluated using adaptive Monte-Carlo estimations. 
\end{remark}

Point 2) is addressed by transforming the continuous optimization problem over $\Xi$ into a discrete optimization over a finite subset  $\widetilde \Xi \subset \Xi$. Choosing the training set  $\tilde \Xi$  is a delicate task. 
As pointed out in \cite[Section 2]{Cohen2020}, if $\tilde \Xi$ is an $\varepsilon$-net
of $\Xi$, then a greedy algorithm for the approximation of the discrete solution manifold $\widetilde \calM = \{u(\xi) : \xi \in \tilde \Xi\}$ generates a sequence of spaces 
that are able to achieve a precision in $O(\epsilon)$ with similar performance as the ideal greedy algorithm. 
However, \done{the cardinality of} $\tilde \Xi $ may be very large 
for a parameter set $\Xi$ in a high-dimensional space $\RR^p$ and when a low precision $\varepsilon$ is required. 
In \cite{Cohen2020}, the authors propose a greedy algorithm which uses different training sets  randomly chosen at each step. Under suitable assumptions on the approximability of the solution map $\xi \mapsto u(\xi)$ by sparse polynomial expansions,  training sets can be chosen of moderate size independent of the parametric dimension $p$. 
\\

To conclude this section, we give a practical deterministic (weak)-greedy algorithm that can be summarized as follows. 

\begin{algorithm}[Deterministic greedy algorithm]\label{ALGO1}
Let $\tilde \Xi \subset \Xi$ be a discrete training set and $V_0 = \{ 0 \}$.\\
For $n\ge 1$ proceed as follows.
\begin{enumerate}[(Step 1.)]
\item Select $$\xi_n \in \arg \max_{\xi \in \tilde \Xi} \Delta(u_{n-1}(\xi), \xi).$$
\item Compute $u(\xi_n)$ and  update $V_n = \text{span} \{ u(\xi_1), \ldots, u(\xi_{n}) \}$. 
\end{enumerate}
\end{algorithm}

Usually, Algorithm \ref{ALGO1} is stopped when $\Delta(u_n(\xi), \xi)$ is below some target precision $\varepsilon>0$ or for a given dimension $n$.


\section{A probabilistic greedy algorithm} \label{sec:PGA} 

In this section, we motivate and present a probabilistic variant of Algorithm \ref{ALGO1}. Such an algorithm relies on the concept of  Probably Approximately Correct  (PAC) maximum. It is proven to be a weak greedy algorithm with high probability.\\

As a starting point for our work,  we assume that, \done{for any value $\xi$ in $\Xi$}, the error estimator required at each step of Algorithm \ref{ALGO1} admits the following form  
\begin{equation}
\Delta(u_n(\xi), \xi) := \|u(\xi)-u_n(\xi)\|^2_V =  \Exp (Z_n(\xi)),
\label{eq:proberr}
\end{equation}
where $Z_{n}(\xi)$ is some parameter-dependent real valued random variable, defined on the probability space $(\Omega, \calF, \Prob)$. Here, $\Exp(Z_{n}(\xi))$  is a probabilistic representation of the current square error $\|u(\xi)-u_n(\xi)\|^2_V$  depending on the targeted applications as discussed in what follows. 

\begin{example}[Estimate of the norm of approximation error] 
\label{ex:estimation}
\done{Suppose} that $u(\xi)$ belongs to $V= L^2(D)$ the Lebesgue space of square integrable functions defined on a bounded set $D \subset \RR^d$. If $\Delta(u_n(\xi), \xi) = \|u(\xi)-u_n(\xi)\|_{L^2}^2$, 
then \eqref{eq:proberr} holds 
with   $Z_n(\xi)= \vert D \vert \vert u(Y,\xi)-u_n(Y,\xi) \vert^2$ where $Y \sim {\calU}(D)$ is a random variable with uniform distribution over $D$.
\end{example}

\begin{example}[Greedy algorithm for control variate  \cite{Blel2021,Boyaval2010}]
Let \done{us} suppose that we want to compute \done{an} MC estimate of the expectation of a parameter-dependent family of random variables 
$u(\xi)$ belonging to a Hilbert space of centered second-order random variables. MC estimate is known to slowly converge with respect to the number of samples of $u(\xi)$. Variance reduction techniques based on control variates  are usually used to improve MC estimates.  
In \cite{Blel2021} the authors propose a RB paradigm to compute a control variate with a greedy algorithm of the form of Algorithm \ref{ALGO1} where $\Delta(u_n(\xi), \xi) = \mathbb{E} (Z_n(\xi)) $ with $Z_n(\xi)=\vert u(\xi)-u_n(\xi) \vert^2$ in \eqref{eq:proberr}.
\end{example}

\subsection{Main algorithm} \label{sec:PGreedy}

Solving the following optimization problem 
\begin{equation}
\label{eq:opt}
\xi_n \in \arg \max_{\xi \in \tilde \Xi}\Exp (Z_{n-1}(\xi))
\end{equation}
is in general out of reach, since $\Exp(Z_{n-1}(\xi))$ is unknown a priori or too costly to compute. Then, we propose a greedy algorithm with an approximate solution of \eqref{eq:opt}. 

\begin{algorithm}[Probabilistic greedy algorithm]\label{ALGO2}
\done{Let $\tilde \Xi \subset \Xi$ be a discrete training set.}  Starting from $V_0 = \{ 0 \}$, proceed, for $n\ge 1$, as follows. 
\begin{enumerate}[(Step 1.)]
\item Select $$\xi_n \in {\cal S}(Z_{n-1}(\xi),\tilde \Xi).$$
\item Compute $u(\xi_n)$ and  update $V_n = \text{span} \{ u(\xi_1), \ldots, u(\xi_{n}) \}$. 
\end{enumerate}
\end{algorithm}

The question is now how to choose properly the set of candidate parameter values ${\cal S}(Z_{n-1}(\xi),\tilde \Xi)$? In view of numerical applications, a first practical and naive approach is to seek $\xi_n$   maximizing the empirical mean, i.e.
$${\cal S}(Z_{n-1}(\xi),\tilde \Xi) := \arg \max_{\xi \in \tilde \Xi} \overline{Z_{n-1}(\xi)}_K$$
where $ \overline{Z_{n-1}(\xi)}_K = \frac 1K \sum_{i=1}^K (Z_{n-1}(\xi))_i$ with $K$ i.i.d. copies of $Z_{n-1}(\xi)$. 
\done{In the following, this algorithm will be called {\bf MC-greedy}.} Despite its simplicity, it is well known that such an estimate for the expectation suffers from low convergence with respect to the number of samples  leading to possible high computational costs especially if $Z_n(\xi)$ is expensive to evaluate. Moreover, nothing ensures that the returned (random) parameter $\xi_n$ is a (quasi-)optimum for \eqref{eq:opt}, almost surely or at least with high probability.\\

Instead, the so-called {\it bandit algorithms} (see, e.g., monograph \cite{Lattimore2022}) are good candidates to address  \eqref{eq:opt}. 
Here, we particularly focus on PAC bandit algorithms that for each $n$ return a parameter value $\xi_n$ which is a probably approximately correct (PAC) maximum in relative precision for $\Exp (Z_n(\xi))$ over $\tilde \Xi$ (see \cite{Billaud22Jun}). For a given  $\varepsilon \in (0,1)$ and probability $\lambda_n \in (0,1)$, letting $\xi_n^{\star} \in \arg \max_{\xi \in \tilde \Xi} \Exp[Z_{n-1}(\xi)]$, such an algorithm returns $\xi_n$ satisfying 
\begin{align}
\Prob \left(  \Exp(Z_{n-1}(\xi_n^\star)) -  \Exp(Z_{n-1}(\xi_n))\le \varepsilon   \Exp(Z_{n-1}(\xi_n^\star)) \right) \ge 1 -  \lambda_n.
\label{eq:PACsol}
\end{align}
We use the notation ${\cal S}(Z_{n-1}(\xi),\tilde \Xi) :=   \mathrm{PAC}_{\lambda_n,\varepsilon}(Z_{n-1},\tilde \Xi)$ when $\xi_n$ satisfies \eqref{eq:PACsol}. \done{The resulting greedy algorithm is called {\bf PAC-greedy}.}
In practice, the adaptive bandit algorithm in relative precision  introduced in \cite[Section 3.2]{Billaud22Jun} is particularly interesting in the case where $Z_n(\xi)$ is costly to evaluate since it preferentially samples  the random variable $Z_n(\xi)$ for the parameter values  for which it is more likely to find a maximum.  Hence, it outperforms the mean complexity of a naive approach in terms of number of generated samples. Appendix \ref{A:bandit} gives a detailed presentation of such a PAC adaptive bandit algorithm. As stated in Proposition \ref{prop:banditresult}, such an algorithm provides a PAC maximum in relative precision, that fulfills \eqref{eq:PACsol},  in the particular case where $\{Z_n(\xi), \xi \in  \tilde \Xi\}$ are random variables satisfying some concentration inequality. The interested reader can refer to \cite{Billaud22Jun} and included references for more details. 

\subsection{ Analysis of  \done{PAC-greedy algorithm}} \label{label:PACalgo}

Now, we propose and analyze a probabilistic greedy algorithm where  the parameter $\xi_n$ is a PAC maximum in relative precision for \eqref{eq:proberr} i.e. satisfying \eqref{eq:PACsol}, at each step $n$.\\

At a step $n$ of the Algorithm \ref{ALGO2}, the reduced space $V_n=\mathrm{span}\{u(\xi_1), \dots, u(\xi_{n})\}$, as well as the approximation $u_n(\xi)$ are no longer deterministic. Indeed, they are related to the selected parameters $\xi_1, \dots, \xi_n$ depending themselves  on the errors at the previous steps through i.i.d. samples of the random variables $Z_{i}(\xi)$ for all $\xi \in\Xi$ and $i <n$ (required during PAC selection of $\xi_n$). Now, we prove that Algorithm \ref{ALGO2} is a weak greedy algorithm with high probability.

\begin{theorem}\label{eq:probaweakgreedy}
Take $(\lambda_n)_{n \ge 1} \subset (0,1)^{\mathbb{N}^+}$ such that $\sum_{n\ge 1} \lambda_n = \lambda < 1$, $\varepsilon \in (0,1)$ and $\tilde \Xi \subset \Xi$ a discrete training set. Moreover, suppose that for $n \ge 1$ the approximation $u_n$ of $u$ in $V_n$ is {\it quasi-optimal} in the sense that it satisfies \eqref{eq:suboptimal} with $\Xi$ replaced by $\tilde \Xi$.
Then, Algorithm \ref{ALGO2} is a weak-greedy algorithm of parameter $\frac{\sqrt{1 - \varepsilon}}{C}$, with probability at least $1 - \lambda,$ i.e.
\begin{equation}
\Prob  \left ( \| u(\xi_n) - P_{V_{n-1}}u(\xi_n) \|_{V} \ge \frac{\sqrt{1 - \varepsilon}}{C} \max_{\xi \in \tilde \Xi} \| u(\xi) - P_{V_{n-1}}u(\xi) \|_{V} , \forall n \ge 1 \right ) \ge 1 - \lambda.
\end{equation}
\end{theorem}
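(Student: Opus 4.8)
The plan is to establish the weak-greedy inequality \emph{at each fixed step} $n$ on the event that the PAC selection \eqref{eq:PACsol} succeeds — an event of probability at least $1-\lambda_n$ — and then to glue these events together over all $n$ by a union bound. The per-step argument is essentially algebraic; the only genuinely delicate point is the probabilistic bookkeeping needed to justify the union bound, since the data at step $n$ depend on the random history of the previous selections.

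For the per-step inequality, I would first use the probabilistic representation \eqref{eq:proberr} to identify $\Exp(Z_{n-1}(\xi))$ with $\|u(\xi)-u_{n-1}(\xi)\|_V^2$, so that $\xi_n^\star$ maximizes $\xi\mapsto\|u(\xi)-u_{n-1}(\xi)\|_V$ over $\tilde\Xi$. Rearranging the relative-precision guarantee \eqref{eq:PACsol} into $\Exp(Z_{n-1}(\xi_n))\ge(1-\varepsilon)\,\Exp(Z_{n-1}(\xi_n^\star))$ and taking square roots (licit since both sides are nonnegative) gives, on the PAC event,
\[
\|u(\xi_n)-u_{n-1}(\xi_n)\|_V \;\ge\; \sqrt{1-\varepsilon}\,\max_{\xi\in\tilde\Xi}\|u(\xi)-u_{n-1}(\xi)\|_V .
\]
I would then convert the $u_{n-1}$-errors into orthogonal-projection errors: on the left, the quasi-optimality assumption \eqref{eq:quasioptn} yields $\|u(\xi_n)-P_{V_{n-1}}u(\xi_n)\|_V\ge \tfrac1C\|u(\xi_n)-u_{n-1}(\xi_n)\|_V$, while on the right the optimality of the orthogonal projection gives $\|u(\xi)-P_{V_{n-1}}u(\xi)\|_V\le\|u(\xi)-u_{n-1}(\xi)\|_V$ for every $\xi$, hence the same inequality for the maxima over $\tilde\Xi$. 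Chaining these with the display produces exactly the weak-greedy inequality with parameter $\tfrac{\sqrt{1-\varepsilon}}{C}$, valid on the PAC event of step $n$.

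The remaining, and hardest, step is to turn the $n$ separate guarantees into the single statement holding for all $n$ simultaneously. Here I would read \eqref{eq:PACsol} as a guarantee \emph{conditional} on the $\sigma$-algebra $\calF_{n-1}$ generated by all samples drawn before step $n$, relative to which $V_{n-1}$ and $\xi_n^\star$ are deterministic: for every realization of the history, the conditional probability of the step-$n$ failure event $B_n$ (the complement of the PAC event) is at most $\lambda_n$, so taking expectation over the history gives $\Prob(B_n)\le\lambda_n$ unconditionally. Since the step-$n$ weak-greedy inequality holds whenever $B_n$ fails, the negation of the global statement is contained in $\bigcup_{n\ge1}B_n$, and a union bound gives $\Prob\!\big(\bigcup_{n\ge1}B_n\big)\le\sum_{n\ge1}\lambda_n=\lambda$. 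Passing to the complement yields the announced bound $1-\lambda$. The subtlety to watch is precisely that one must not assume independence of the events across steps; the conditional-then-expectation argument is what makes the union bound legitimate despite the strong dependence introduced by the adaptively built spaces $V_{n-1}$.
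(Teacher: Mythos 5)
Your proposal is correct and follows essentially the same route as the paper's proof: the same algebraic chain (PAC relative-precision bound, quasi-optimality on the selected parameter, optimality of the orthogonal projection on the maximand) and the same probabilistic bookkeeping (reading the PAC guarantee conditionally on the history of samples, then applying the tower property and a union bound over $n$). The only cosmetic difference is that the paper works with conditional expectations $\Exp(\,\cdot\mid Z_{<n})$ from the outset, whereas you introduce the conditioning only in the final gluing step, but you correctly identify and resolve the dependence issue in the same way.
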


\begin{proof} 
Let first introduce some useful notation. We denote by $\mathbb{P}^{n-1}(\cdot) := \Prob(  \cdot \vert  Z_{<n} ) $  the conditional probability measure  with respect to $Z_{<n}$ that denotes the collection of random variables $Z_{i}(\xi)_k$ for all $\xi \in\Xi$ and $i <n$, where $Z_{i}(\xi)_k$ are i.i.d. copies of $Z_{i}(\xi)$. The related conditional expectation is \done{$\Exp^{n-1} \left( \cdot \right)=\Exp \left( \cdot \vert  Z_{<n}  \right)$}. Now, let $A = \cap_{n\ge 1} A_n$, each event $A_n$ being defined as
\begin{align*}
A_n := \done{\left\{ \Exp^{n-1}(Z_{n-1}(\xi_n^{\star})) -  \Exp^{n-1}(Z_{n-1}(\xi_n)) \le \varepsilon \Exp^{n-1}(Z_{n-1}(\xi_n^{\star})) \right\}},
\end{align*}
 with $\xi_n^{\star} \in \arg \max_{\xi \in \tilde \Xi} \done{ \Exp^{n-1}(Z_{n-1}(\xi))}$. Then, at each step $n$ of Algorithm \ref{ALGO2}, the parameter $\xi_n$ is a PAC maximum knowing $Z_{<n}$ i.e.
 \begin{equation}
 \label{eq:PACmaxcond}
 \Prob^{n-1} (A_n) \ge 1 - \lambda_n.
\end{equation}
Finally, as $u_{n-1}(\xi)$ is completely determined by all the steps before $n$ (i.e. depending only on $Z_{<n}$), we have
\begin{equation}
\|u(\xi)-u_{n-1}(\xi)\|^2_{V} = \Exp^{n-1}(Z_{n-1}(\xi)).
\label{eq:conderror}
\end{equation}

For all $n \ge 1$, the quasi-optimality condition \eqref{eq:quasioptn} and probabilistic representation \eqref{eq:conderror} lead to
\begin{equation}\label {eq:a}
\| u(\xi_n) - P_{V_{n-1}}u(\xi_n) \|^2_{V}  \ge \frac{1}{C^2} \| u(\xi_n) - u_{n-1}(\xi_n) \|^2_{V} = \frac{1}{C^2} \done{\Exp^{n-1}(Z_{n-1}(\xi_n))}.
\end{equation}
Moreover, if $A$ holds we have for $n \ge 1$
\begin{equation}\label{eq:b}
\done{ \Exp^{n-1}(Z_{n-1}(\xi))} \ge (1 - \varepsilon)\done{\Exp^{n-1}(Z_{n-1}(\xi_n^{\star}))} 
 \ge  (1 - \varepsilon)\max_{\xi \in \tilde \Xi} \| u(\xi) - P_{V_{n-1}}u(\xi) \|^2_{V}
 \end{equation}
by definition of $\xi_n^{\star}$. Thus,  by combining \eqref{eq:a} and \eqref{eq:b} we have that $A$ implies for all $n \ge 1$
$$
\|u(\xi_n) - P_{V_{n-1}}u(\xi_n) \|_{V} \ge \frac{\sqrt{1 - \varepsilon}}{C} \max_{\xi \in \tilde \Xi} \| u(\xi) - P_{V_{n-1}}u(\xi) \|_{V}.
$$
We now estimate $\Prob(A)$
\begin{align*}
\Prob(A) =& 1 - \Prob ( \overline{A} )  \ge 1 - \sum_{n\ge 1} \Prob(\overline{A_n})  = 1 - \sum_{n\ge 1} \done{ \Exp \left(\mathds{1} _{\overline{A_n}} \right)} \\=& 1 - \sum_{n\ge 1}  \done{\Exp \left( \underbrace{\Exp \left( \mathds{1} _{\overline{A_n}} \vert Z_{<n} \right)}_{\Prob^{n-1}(\overline{A_n})} \right)}  \ge 1 - \sum_{n\ge 1} \lambda_n,
\end{align*}
where the last inequality derives from \eqref{eq:PACmaxcond}, which concludes the proof.
\end{proof}

\begin{remark} Theorem \ref{eq:probaweakgreedy} proves that Algorithm \ref{ALGO2} is a weak greedy algorithm, with probability $1-\lambda$, for the approximation of the   discrete solution manifold $\widetilde {\cal M}$. Thus, the approximation error $\sigma_n( \widetilde {\cal M})$ has the same  decay rate as $d_n( \widetilde {\cal M})$ for algebraic or exponential convergence. In the lines of  \cite{Cohen2020}, it is possible to consider also a fully probabilistic variant of Algorithm \ref{ALGO2}, in which a training set $\Xi_n$ randomly chosen is used at each step $n$ of Algorithm \ref{ALGO2} instead of $\tilde \Xi$. For a particular class of functions that can be approximated by polynomials with a certain algebraic rate, it can be proven that, for suitable chosen size of random training set $\Xi_n$, the resulting algorithm is a weak greedy algorithm with high probability with respect to the continuous solution manifold ${\cal M}$.
\end{remark}


\section{Reduced basis method for parameter-dependent PDEs with probabilistic interpretation} \label{sec:probaPDE}

We recall that this work is motivated by the approximation, in a reduced basis framework, of  a costly function  $u(\xi) : D \to \RR$ defined on the  spatial domain $ D \subset \RR^d$ depending on the parameters $\xi$ lying in $\Xi  \subset \RR^p$. 
Here we consider the problem where $u$ is the solution of a parameter-dependent PDE with probabilistic interpretation. \\

Let $D $ be an open bounded domain in $\RR^d$. For any parameter $\xi \in \Xi$, we seek $u(\xi) : \overline D\to \RR$ the solution of  the following boundary value problem, 
\begin{equation}
\begin{split}
- \calA(\xi) u(\xi)  & = g(\xi) \quad \text{in} \quad D , \\
u(\xi) & = f(\xi)  \quad \text{on} \quad \partial D,
\end{split}
\label{eq:ellipticPDEs}
\end{equation}
where $f(\xi) : \partial D  \to \RR$, $g(\xi): \overline{D } \rightarrow \RR$ are respectively the boundary condition and source term, and $\calA(\xi)$  is a linear and elliptic partial differential  operator.\\

Since the exact solution of \eqref{eq:ellipticPDEs} is not computable in general, it is classical to consider instead $u_h(\xi)$ an approximation in some finite dimensional space $V_h \subset V$ deduced from some numerical discretization of the PDE. Classical RBM applies in that context, relying on some variational principles to provide an approximation $u_n(\xi)$ of $u_h(\xi)$ in a reduced space $V_n \subset V_h$, of small dimension, obtained through greedy algorithm (see Section \ref{sec:reduced basisM}). Here, we overcome such a priori discretization of the PDE and directly address the approximation of the solution $u(\xi)$ of  \eqref{eq:ellipticPDEs}. The key idea is to use the so-called {\it Feynman-Kac representation formula} that allows to compute pointwise estimates of $u(\xi)$ for any $x \in \bar D$. This particular framework raises the following practical questions. During the offline step, how to  choose a computable error estimator $\Delta(u_n(\xi), \xi)$ required in greedy algorithm and compute the snapshots required for generating the reduced basis and related reduced space $V_n$? During the online step, how to compute the approximation $u_n$?\\

To that goal, in this section, a probabilistic RBM using only (noisy) pointwise evaluations is presented. We first recall the Feynman-Kac formula  in  Section \ref{sec:FK}. In Section \ref{prob:reducedspaceselect}, we detail a probabilistic greedy algorithm for construction of the reduced space  $V_n$ in this setting. Finally, in Section \ref{prob:reducedapprox}, we discuss possible approaches for computing the approximation $u_n(\xi)$.

\subsection{Feynman-Kac representation formula for an elliptic PDE} \label{sec:FK}
 In what follows, $W = (W_t)_{t\ge 0}$ denotes a standard $d$-dimensional Brownian motion defined on the probability space $(\Omega,\calF,\Prob)$ endowed with its natural filtration $(\calF_t)_{t\ge 0}$. For the sake of simplicity, the dependence to parameter $\xi$ is omitted in the presentation of the Feynman-Kac formula.\\

Let us consider the boundary problem \eqref{eq:ellipticPDEs}, where the partial differential  operator $\calA$ is given as
\begin{equation}
\calA = \dfrac 1 2 \sum_{i,j=1}^d (\sigma \sigma ^T)_{ij} \dfrac{\partial^2}{\partial x_i \partial x_j} + \sum_{i=1}^d b_i \dfrac{\partial}{\partial x_i} .
 \label{differentialoperator}
\end{equation}
It is the {\it infinitesimal generator}  associated to the parameter-dependent $d$-dimensional diffusion process  $X^{x} = (X^{x}_t)_{t\ge 0}$, adapted to $(\calF_t)_{t\ge 0}$,  solution of the following stochastic differential equation (SDE)
\begin{equation}
d X^{x}_t = b(X^{x}_t) dt + \sigma(X^{x}_t) d W_t, \quad X^{x}_0 = x \in \overline{D },
\label{eq:SDE}
\end{equation}
where $b(\cdot): \RR^d  \to \RR^d$ and $\sigma(\cdot) : \RR^d \to \RR^{d\times d}$ are the drift and diffusion coefficients, respectively.\\

Before recalling Feynman-Kac formula, we  introduce additional assumptions and notation. Denoting by $\|\cdot\|$ both euclidean norm on $\RR^d$ and Frobenius norm on $\RR^{d\times d}$, we first introduce the assumption that $b$ and $\sigma$ are Lipschitz continuous.
 
\begin{hyp}\label{A1} There exists a constant $0<M<+\infty$ such that for all $x,y \in \bar D$ we have
\begin{equation}
\|b(x) - b(y)\| + \|\sigma(x)- \sigma(y)\|\le M \|x-y\|.
\label{eq:a1lipschitz}
\end{equation}
\end{hyp}
Under Assumption \ref{A1}, there exists a unique {\it strong} solution to Equation \eqref{eq:SDE} (see e.g. \cite[Chapter 5, Theorem 1.1.]{Friedman2010}).\\

Denoting $a = \sigma \sigma^T$, we introduce the following uniform ellipticity assumption.
\begin{hyp}\label{A2}
There exists $c > 0$ such that
$$
y^T a(x) y \geq c \|y\|^2, \quad \text{for all } y \in \RR^d, \; x \in \overline{D }.
$$
\end{hyp}
As problem \eqref{eq:ellipticPDEs} is defined on a bounded domain, we  define the {\it first exit time} of $D $ for the process $X^x$ as 
\begin{equation}
\tau^{x} = \inf \left\{ s > 0 ~ : ~ X^{x}_s \notin {D}  \right\}.
\label{eq:exittime}
\end{equation}
Also, we assume some regularity property on the spatial domain $\overline{D}$ and data. 
\begin{hyp}\label{A3}
The domain $D $ is an open connected bounded domain of $\RR^d$, regular in the sense that it satisfies
$$
\Prob(\tau^x = 0) = 1, \quad x \in \partial D.
$$
\end{hyp}
\begin{hyp}\label{A4}
 We assume that $f$ is continuous on $\partial D $, $g$ is H\"older-continuous on $\overline{D }$.  
\end{hyp}

The following probabilistic representation theorem \cite[Chapter 6, Theorem 2.4]{Friedman2010} holds.  
\begin{theorem}[Feynman-Kac formula\label{th:FK}] Under Assumptions \ref{A1}-\ref{A4} there exists a unique solution of \eqref{eq:ellipticPDEs} in $\calC ( \overline{D } ) \cap \calC^2( D )$, which satisfies for all $x \in \overline{D }$  
\begin{equation}
\begin{split}
u(x) & = \Exp \left(   f(X^{x}_{\tau^{x}}) + \int_0^{\tau^{x}}  g(X^{x}_t) dt \right) \done{=:}~  \Exp(F(x,X^{x}) ) ,
\end{split}
\label{eq:FK}
\end{equation}
where $X^x$ is the stopped diffusion process solution of \eqref{eq:SDE}. 
\end{theorem}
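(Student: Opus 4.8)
The plan is to decouple the two assertions of the theorem. Existence of a classical solution $u \in \calC(\overline D) \cap \calC^2(D)$ I would obtain from standard linear elliptic theory: Assumption \ref{A2} makes $\calA$ uniformly elliptic, Assumption \ref{A4} supplies H\"older data, and Assumption \ref{A3} (every boundary point is regular for the exit time) plays the role of the exterior-barrier condition ensuring continuity up to $\partial D$; this is exactly the setting of Schauder theory and Perron's method. Uniqueness, on the other hand, I would not prove directly by a maximum principle but extract it for free from the probabilistic representation: once every classical solution is shown to equal the right-hand side $\Exp(F(x,X^{x}))$, two such solutions necessarily coincide. Hence the whole content reduces to establishing the representation for an arbitrary solution $u$.

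For the representation I would fix $x \in D$ and apply It\^o's formula to $t \mapsto u(X^{x}_t)$ along the diffusion \eqref{eq:SDE}. Because the quadratic covariation is $a_{ij}(X^{x}_t)\,dt$ with $a = \sigma\sigma^T$, the finite-variation term produced by It\^o is precisely $(\calA u)(X^{x}_t)\,dt$, so using $\calA u = -g$ and integrating up to the bounded stopping time $t \wedge \tau^{x}$ gives
$$
u(x) = \Exp\!\left[u(X^{x}_{t\wedge\tau^{x}})\right] + \Exp\!\left[\int_0^{t\wedge\tau^{x}} g(X^{x}_s)\,ds\right],
$$
the stochastic integral contributing zero expectation. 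It then remains to send $t \to \infty$ and identify the limits with $\Exp[f(X^{x}_{\tau^{x}})]$ and $\Exp[\int_0^{\tau^{x}} g(X^{x}_s)\,ds]$.

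Justifying this passage to the limit rests on three ingredients. First, $\tau^{x} < \infty$ with $\Exp[\tau^{x}] < \infty$: I would build an auxiliary function $\psi(y) = -e^{\mu y_1}$ and use that $a_{11} \ge c > 0$ (Assumption \ref{A2}) together with the boundedness of $b$ on $\overline D$ to get $\calA\psi \le -1$ on $D$ for $\mu$ large; It\^o applied to $\psi$ then bounds $\Exp[t\wedge\tau^{x}]$ by the oscillation of $\psi$ over the bounded set $\overline D$, and monotone convergence yields $\Exp[\tau^{x}] < \infty$. Second, since paths are continuous and $\tau^{x} < \infty$, $X^{x}_{t\wedge\tau^{x}} \to X^{x}_{\tau^{x}} \in \partial D$, the membership in $\partial D$ being guaranteed by the regularity Assumption \ref{A3}; continuity of $u$ up to the boundary and the boundary condition give $u(X^{x}_{\tau^{x}}) = f(X^{x}_{\tau^{x}})$. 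Third, dominated convergence applies because $u$ is bounded on the compact $\overline D$ and $\int_0^{\tau^{x}}|g(X^{x}_s)|\,ds \le \|g\|_\infty\,\tau^{x}$ is integrable.

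The delicate point---and the step I expect to be the main obstacle---is the vanishing of the expectation of the stochastic integral $\int_0^{t\wedge\tau^{x}}\nabla u(X^{x}_s)^T\sigma(X^{x}_s)\,dW_s$. Although $u \in \calC^2(D)$, its gradient may blow up near $\partial D$, so $\nabla u^T\sigma$ need not be square integrable up to $\tau^{x}$ and the integral is only a local martingale a priori. I would handle this by localization: introduce exit times $\tau_k^{x}$ of compactly contained subdomains $D_k \uparrow D$ (equivalently, stop before the path comes within $1/k$ of $\partial D$), on which $\nabla u$ and $\sigma$ are bounded so that the integral is a genuine martingale of zero expectation, write the identity with $\tau^{x}$ replaced by $\tau_k^{x}$, and then let $k \to \infty$. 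Controlling the boundary term $\Exp[u(X^{x}_{\tau_k^{x}})]$ in this inner limit---again using boundedness and continuity of $u$ on $\overline D$ together with $\tau_k^{x} \uparrow \tau^{x}$---is what ultimately delivers the clean formula \eqref{eq:FK}.
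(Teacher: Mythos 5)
The paper offers no proof of this statement at all: it is quoted directly from Friedman's book (Chapter 6, Theorem 2.4, as cited just above the theorem), and your sketch reproduces essentially the standard argument found there --- classical elliptic theory for existence, It\^o's formula applied to $u(X^x_t)$ with localization on subdomains $D_k \subset\subset D$ to handle the possibly non-square-integrable stochastic integral, the exponential auxiliary function to get $\Exp[\tau^x]<\infty$, and dominated convergence to pass to the limit. Your outline is correct; the only small inaccuracy is that $X^x_{\tau^x}\in\partial D$ for $x\in D$ follows from path continuity alone, whereas Assumption \ref{A3} is really what ensures the formula also holds at boundary starting points (where $\tau^x=0$ a.s.) and that the probabilistic candidate attains the boundary data continuously.
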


Theorem \ref{th:FK} allows to derive a probabilistic numerical method for the computation of pointwise MC estimate of $u$, see Appendix \ref{probnumu}.

\subsection{Offline step}  \label{prob:reducedspaceselect}

During the offline step, the probabilistic greedy algorithm presented in Section \ref{sec:PGA}  is considered to construct the reduced space $V_n$. The keystone of such an algorithm  is the probabilistic reinterpretation of the error estimate $\Delta (u_n(\xi), \xi)$ as in Equation \eqref{eq:proberr}. Using the Feynman-Kac  representation formula, we show in Section \ref{prob:proberror} that it is possible to rewrite the square of the approximation error as an expectation. Then, in Section \ref{sec:algoimplementation}, we discuss possible strategies for practical implementation of such an algorithm.

\subsubsection{Probabilistic error estimate} \label{prob:proberror}

Let us assume that $u_n(\xi)$ is a linear approximation of $u(\xi)$ in a given reduced   space $V_n\subset V$ (e.g., obtained using Algorithm \ref{ALGO2}). We recall that $u(\xi)  \in \calC(\overline{D}) \cap \calC^2(D)$ is the unique solution of \eqref{eq:ellipticPDEs} with the following probabilistic representation 
\begin{equation}
u(x,\xi) = \Exp \left( F(x,X^{x,\xi},\xi)  \right), \quad x \in \bar D,
\label{eq:probarepxi}
\end{equation}
with $X^{x,\xi}$ the parameter-dependent stopped diffusion process solution of \eqref{eq:SDE}. In classical RB methods, the error estimate $\Delta(u_n(\xi), \xi)$ used in Algorithm \ref{ALGO1} is usually related to some suitable norm of the equation residual. Here, we follow another path by considering the $L^2$-norm of the current approximation error $e_n(\xi) = u(\xi)-u_n(\xi)$, i.e.
$$\Delta(u_n(\xi), \xi) = \| e_n(\xi) \|^2_{L^2}.$$ In what follows, we give a possible probabilistic reinterpretation of this error. Assuming that $u_n(\xi)$ is regular enough (the regularity being inherited from the snapshots), the error $e_{n}(\xi) := u(\xi) - u_n(\xi)$ is the unique solution, for all $\xi$ in $\Xi$, of
\begin{align}
\begin{aligned}
-\calA (\xi) e_{n}(\xi)  & = g_n(\xi)  \quad \text{on} \quad D, \\
e_{n}(\xi) & = f_n(\xi) \quad \text{on} \quad \partial D,
\end{aligned}
\label{chap5eq:ellipticerrorPDE}
\end{align}
where $f_{n}(\xi) := f(\xi) - u_{n}(\xi)$ and $g_{n}(\xi) = g(\xi) + \calA (\xi) u_{n}(\xi)$.
By Feynman-Kac representation theorem,  for all $\xi$ in $\Xi$, $e_{n}(\xi)$ is the unique solution of \eqref{chap5eq:ellipticerrorPDE} in $\calC(\overline{D}) \cap \calC^2(D)$ and satisfies for all $x \in \overline{D}$  
\begin{equation}
e_n(x,\xi) = \Exp \left( f_{n}(X^{x,\xi}_{\tau^{x,\xi}},\xi) + \int_0^{\tau^{x,\xi}}  g_n(X^{x,\xi}_t,\xi) dt  \right) \done{=:}~ \Exp\left( F_{n}(x,X^{x,\xi},\xi)  \right),
\label{chap5eq:FKerrorell}
\end{equation} 
with ${X}^{x,\xi}$ the stopped diffusion process solution of \eqref{eq:SDE}. Then, we have the following probabilistic reinterpretation for $\| e_{n}(\xi) \|^2_{L^2}$. 

\begin{theorem}
Let $Y \sim U(D)$ be uniformly distributed on $D$. Let $W$ and $\tilde W$ be two independent standard d-dimensional Brownian motions defined on $(\Omega, \mathcal{F}, \mathbb{P})$ and independent of $Y$. For any $x \in  \overline{D}$, let $X^{x,\xi}$ and $\tilde{X}^{x,\xi}$ be solutions of \eqref{eq:SDE} with $W$, $\tilde W$ respectively. Then we have for any $\xi$ in $\Xi$
\begin{align}
\| e_{n}(\xi) \|^2_{L^2} =   \vert D \vert \Exp \left( Z_n(\xi) \right),
\label{eq:probarepresentationerrorell}
\end{align}
with $Z_n(\xi) = F_n(Y,X^{Y,\xi},\xi)  F_n(Y,\tilde{X}^{Y,\xi},\xi)$ 
and $ \vert D \vert $ the Lebesgue measure of $D$.
\label{thm:errrorprobarepresentationell}
\end{theorem}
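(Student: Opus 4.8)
The plan is to start from the definition of the $L^2$-norm as an integral over $D$ and convert it into an expectation over the uniform random variable $Y$, which naturally produces the factor $\vert D \vert$. Since $e_n(\cdot,\xi)$ is real-valued, I would write
$$\| e_n(\xi) \|_{L^2}^2 = \int_D e_n(x,\xi)^2 \, dx = \vert D \vert \, \Exp\left( e_n(Y,\xi)^2 \right),$$
where the last equality uses that $Y \sim \calU(D)$ has density $\vert D \vert^{-1} \mathds{1}_D$. This reduces the claim to showing $\Exp(e_n(Y,\xi)^2) = \Exp(Z_n(\xi))$, i.e. to representing the square of the pointwise error as an expectation of a product.

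The key step is the standard doubling device for the square of an expectation. By the Feynman-Kac representation \eqref{chap5eq:FKerrorell}, for each fixed $x$ we have $e_n(x,\xi) = \Exp(F_n(x, X^{x,\xi},\xi))$, the expectation bearing on the Brownian motion $W$ driving $X^{x,\xi}$. Introducing the independent Brownian motion $\tilde W$ and its associated diffusion $\tilde X^{x,\xi}$, and using that $W$, $\tilde W$ and $Y$ are mutually independent, I would argue that conditionally on $Y=y$ the two diffusions $X^{y,\xi}$ and $\tilde X^{y,\xi}$ are independent and each carries the law used in the Feynman-Kac formula. Hence
$$\Exp\left( e_n(Y,\xi)^2 \mid Y \right) = \Exp\left( F_n(Y, X^{Y,\xi},\xi) \mid Y \right) \Exp\left( F_n(Y, \tilde X^{Y,\xi},\xi) \mid Y \right) = \Exp\left( F_n(Y, X^{Y,\xi},\xi) F_n(Y, \tilde X^{Y,\xi},\xi) \mid Y \right),$$
the last equality being exactly the factorization of the expectation of a product of two conditionally independent factors. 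Taking the expectation over $Y$ and recognizing $Z_n(\xi) = F_n(Y, X^{Y,\xi},\xi) F_n(Y, \tilde X^{Y,\xi},\xi)$ then yields the claim.

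The main obstacle I anticipate is rigorously justifying the interchanges of integration and expectation (Fubini--Tonelli) together with the conditioning argument, all of which hinge on suitable integrability. One needs $F_n(x, X^{x,\xi},\xi)$ to be square-integrable uniformly enough in $x$ so that $x \mapsto e_n(x,\xi)$ lies in $L^2(D)$ and the product $Z_n(\xi)$ is integrable. This follows from the standing hypotheses: under Assumptions \ref{A1}--\ref{A4} the error $e_n(\cdot,\xi) \in \calC(\overline D)$ is continuous on the compact $\overline D$, hence bounded, so $\Exp(e_n(Y,\xi)^2) < \infty$; and the uniform ellipticity of Assumption \ref{A2} on the bounded domain $D$ guarantees finite moments of the exit time $\tau^{x,\xi}$, which controls the integrability of $F_n$ and legitimizes the Fubini exchange. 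Once integrability is secured, the remaining computation is the purely measure-theoretic factorization above, so the heart of the argument is this conditional-independence bookkeeping rather than any hard estimate.
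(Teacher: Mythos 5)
Your proposal is correct and follows essentially the same route as the paper's proof: rewrite the $L^2$-norm as $\vert D\vert\,\Exp(e_n(Y,\xi)^2)$, apply the Feynman--Kac representation conditionally on $Y$, factor the square using the conditional independence of $X^{Y,\xi}$ and $\tilde X^{Y,\xi}$ given $Y$, and conclude by the tower property. Your additional remarks on the integrability needed to justify the Fubini and conditioning steps go slightly beyond what the paper records, but the core argument is identical.
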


\begin{proof}
We first recall
$$
\| e_{n}(\xi) \|^2_{L^2}  = \int_D e_n(x,\xi)^2 dx  =   \vert D  \vert \Exp \left( e_n(Y,\xi)^2  \right).
$$
Since, for any $x$, $X^{x,\xi}$ and $\tilde{X}^{x,\xi}$ are i.i.d. random processes, we have
$$
 \Exp \left( e_n(Y,\xi)^2  \right) = \Exp \left( \Exp \left( F_n(Y,X^{Y,\xi},\xi)) \vert Y   \right)^2  \right)$$
 $$ = \Exp  \left( \Exp \left( F_n(Y,X^{Y,\xi},\xi)  \vert Y   \right) \Exp  \left( F_n(Y,\tilde{X}^{Y,\xi},\xi)  \vert  Y \right) \right), 
$$
and
$$
\| e_{n}(\xi) \|^2_{L^2}  = \vert D  \vert  \Exp \left(\Exp  \left( F_n(Y,X^{Y,\xi},\xi) F_n(Y,\tilde{X}^{Y,\xi},\xi) \vert  Y   \right) \right).
$$
Then by the law of iterated expectation we obtain \eqref{eq:probarepresentationerrorell}.
\end{proof}

\begin{remark}
Assuming the existence of probabilistic representations for the gradient of $u(\xi)$ and $u_n(\xi)$, it would be possible to consider probabilistic interpretation of other  norms of the approximation error, such as the $H^1$-norm. Such probabilistic representations have been derived in simple cases, see e.g. \cite[Corollary IV.5.2]{Gobet2016}. 
\end{remark}

\subsubsection{A probabilistic greedy algorithm using pointwise evaluations} \label{sec:algoimplementation}

For the purpose of numerical applications, we can apply Algorithm \ref{ALGO2} together with the error estimate \eqref{eq:probarepresentationerrorell} for the construction of the reduced space $V_n$.  

\paragraph{Sample computation.} The samples of $Z_n(\xi)$ (as defined in Theorem \ref{thm:errrorprobarepresentationell}) are generated from the functional $F_n$ and independent  trajectories of the discrete diffusion process $X^{Y,\Delta t}$\done{,} with $Y \sim {\cal U}(D)$. The discrete diffusion process $X^{Y,\Delta t}$ is computed using a suitable time integration scheme (see Appendix \ref{probnumu})  for the stochastic ODE \eqref{eq:SDE}. 
\paragraph{Snapshot computation.} Within Algorithm \ref{ALGO2}, the reduced space corresponds to $V_n = \mathrm{span} \{u(\xi_1),\\ \dots , u(\xi_n)\}$. However, the snapshots $\{u(\xi_1),\dots, u(\xi_n)\}$ are generally not available  since it requires to compute the exact solution of \eqref{eq:ellipticPDEs} for parameter instances $\{\xi_1,\dots, \xi_n\}$. From Feynman-Kac formula \eqref{eq:FK}, it is possible to compute MC estimates $u_{\Delta t,M}(x,\xi)$ of  $u(\xi)$ from independent realizations of the diffusion process $X^{x,\Delta t}$ starting from $x \in \bar D$ (as detailed in Appendix \ref{probnumu}). 
\done{Then a global numerical approximation can be computed in some finite dimensional linear space of dimension $N$ (potentially much larger than $n$), e.g., by interpolation or least-square method, from these MC pointwise estimates.} To compensate possible slow convergence of MC estimates, one can consider a sequential approach which uses the approximation error at each step as control variate in order to reduce the variance of MC estimates. Such a strategy has been  initially proposed in \cite{Gobet2004Dec,Gobet2006} for interpolation, and recently extended for high dimensional problems in \cite{Billaud-Friess2020May}.

\paragraph{Projection computation.}


\done{
For given $\xi$, the approximation $u_n(\xi)$  in $V_n$  can be computed by interpolation or a least-square projection using MC estimates $u_{\Delta t,M}(x,\xi)$ given by \eqref{eq:approxFK}, with suitable choice for evaluation points in $D$ (e.g., using magic points for interpolation \cite{Maday2008Sep}, or optimal sampling for least-squares \cite{cohen2017:optimal_weighted_least_squares}). 
The resulting complexity of this projection step is only linear in $n$ (up to $\log$).  }

\subsection{Online step}   \label{prob:reducedapprox}

\done{
Given  the reduced space $V_n$ obtained during the offline stage,  the approximation $u_n(\xi)$ is computed, with a complexity depending only on $n$ (and not on $N$), following  the procedure described in the projection step of Section \ref{sec:algoimplementation}.\\
}

\section{Numerical applications}
\label{sec:numres}

The aim of this section is \done{twofold}. We first illustrate the feasibility of a greedy algorithm with probabilistic error estimate for the approximation of a parameter-dependent function from its pointwise evaluations. Then, we present some numerical experiment concerning the probabilistic RBM, discussed in Section \ref{sec:probaPDE}, for the solution of parameter-dependent PDEs with probabilistic interpretation. 

\subsection{Approximation of parameter-dependent functions}

Let us consider the problem of computing an approximation $u_n(\xi)$ of $u(\xi)$, from its pointwise evaluations  at given points in $D$. Particularly, we seek $u_n(\xi)$ as the interpolation of $u(\xi)$ in the finite dimensional space $V_n \subset V$, such that
$$u_n(x_i,\xi) = u(x_i,\xi), ~x_i \in \Gamma,$$
with $\Gamma = \{x_1,\dots,x_n\}$  an unisolvant grid of suitably chosen interpolation points in $D$.
\done{Numerical experiments with least-square projection provided similar results. Thus they are not presented in this section.}

\subsubsection{Procedures for the construction of $V_n$}

 For constructing the space $V_n = \mathrm{span} \{u(\xi_1), \dots, u(\xi_n)\}$,  we compare different greedy procedures for the selection of the snapshots $u(\xi_i), i=1, \dots, n$.  First, we use the deterministic greedy Algorithm \ref{ALGO1}, for which $\Delta( u_{n-1}(\xi),\xi)$ is a numerical estimate of the $L^2$-norm of the approximation error $\|u(\xi)-u_n(\xi)\|_{L^2(D)}$ using some integration rule. This approach is confronted to probabilistic alternatives relying on probabilistic reinterpretation of the approximation error 
$$
\|u(\xi)-u_n(\xi)\|^2_{L^2(D)} =  \Exp (Z_n(\xi)).
$$
where $Z_n(\xi) = \vert D\vert \vert u_n(X,\xi)-u(X,\xi) \vert^2$, $X \sim {\cal U}(D)$, as discussed in Example \ref{ex:estimation}. In this setting, the set ${\cal S}(Z_{n-1}(\xi),\tilde \Xi)$ within Algorithm \ref{ALGO2} is obtained using either a crude MC estimate of the expectation or  adaptive bandit  algorithms discussed in Appendix \ref{A:bandit}.  When non asymptotic concentration inequalities are used, the parameter $\xi_n$ returned by Algorithm \ref{ALGO2} is a PAC maximum under suitable assumptions on the distribution of $Z_{n}(\xi)$. In particular, for any $\xi \in \tilde \Xi$, if there exist $a_{n}(\xi),b_{n}(\xi) \in \RR$ such that  $a_{n}(\xi) \le Z_{n}(\xi) \le b_{n}(\xi)$ a.s., concentration inequalities under the form \eqref{eq:concentration} hold. Having such a knowledge a priori of the distribution of $Z_n(\xi)$ and finding the bounds $a_n(\xi), b_n(\xi)$ is not an easy task. Here, since $u(\xi)$ is known, we set the following heuristic bounds
$$
a_{n}(\xi) = \min_{x \in \tilde D}  \vert u(x,\xi)- u_{n}(x,\xi)\vert^2 \text{ and }  b_{n}(\xi) = \max_{x \in \tilde D}  \vert u(x,\xi)- u_{n}(x,\xi)\vert^2,
$$
to perform our computations with $\tilde D \subset D$ a finite subset. Moreover, by Remark \ref{rk:dm},  we  have to define the sequence $(d_m)_{m\ge 1}$ with $d_m = \frac{\lambda_n}{\#\tilde \Xi} \frac{(p-1)}{p}m^{-p}$, $p =2$ and $\lambda_n = \frac\lambda n$.   We also consider a variant relying on asymptotic concentration inequality as Central Limit Theorem (CLT), which overcomes the necessity of computing any bound for $Z_n(\xi)$ and defining the sequence $(d_m)_{m\ge 1}$. These probabilistic approaches are also compared to another naive approach, in which $\xi_n$ is chosen at random in $\tilde \Xi$ (without replacement) at each step $n$ of Algorithm \ref{ALGO2}. \\

In what follows, the deterministic approach is called {\bf D-Greedy}, whereas the probabilistic ones using MC estimate and bandit algorithms are named {\bf MC-greedy} and {\bf PAC-greedy} relying on non asymptotic ({\bf Bounded}) or asymptotic concentration inequalities ({\bf CLT}). The last one is simply referred \done{to as} {\bf Random}.

\subsubsection{Numerical setting}

We perform some numerical tests with the methods discussed in the previous section for the approximation of the two subsequent functions
$$u(x,\xi) = 10x\sin(2\pi x \xi), ~ (x,\xi) \in [0,1]\times [2,4] $$ 
and, following \cite{Blel2021}, 
$$
u(x, \xi) =  \sqrt{x+ 0.1} \mathbb{1}_{[0,\xi]}(x)+ \left(\frac{x -\xi}{2\sqrt{\xi+ 0.1}} +\sqrt{\xi+ 0.1} \right)  \mathbb{1}_{[\xi,1]}(x),~(x,\xi) \in [0,1]^2.
$$
The test case related to each function will be designated by {\bf (TC1)} and {\bf (TC2)}, respectively.\\

For the numerical experiments, the training set $\tilde \Xi$ is obtained using $\# \tilde \Xi = 300$ equally spaced points in $\Xi$, similarly for $\tilde D$ made from equally spaced points in $D$ ($10000$ for (TC1), and $1000$ for (TC2)).  Then, the $\sL^2$-norm of the approximation error is estimated by trapezium rule.   Both deterministic and probabilistic greedy algorithms are stopped for given $n=20$ for (TC1) and $n=30$ for (TC2). The interpolation grid $\Gamma$ is set to be the sequence of magic points \cite{Maday2008Sep}, with respect to the basis of $V_n$.  For the probabilistic procedure with naive MC estimate, we set $K \in \{1, 50\}$.  Finally, for bandit algorithms, the stopping criterion is $\varepsilon = 0.9$ and $\lambda = 0.1$. \done{In \cite[Section 4]{Billaud22Jun}, it was observed that $\lambda$ has little influence on the number of samples $m_n(\xi)$ used by adaptive algorithm. However, an open problem is to find a $\lambda$ that gives an optimal compromise between a high probability of returning a PCA maximum and a small sampling complexity. }

\subsubsection{Numerical results}

Let us first study the quality of the approximations provided by the different approaches. Figures \ref{T13:errormean}  and \ref{T12:errormean} represent the evolution of the estimated expectation $\mathbb{E}_\xi$ and maximum, with respect to $\xi$, of the approximation error  $\|u_n(\xi)-u(\xi)\|_{L^2(\tilde D)}$  for (TC1) and (TC2).  These estimates have been computed using $100$ independent realizations of $u_n(\xi)$ obtained from uniform draws of $\xi$ in $\Xi$.  In that case, only one realisation of the probabilistic algorithms is performed for the comparison. For both test cases, D-greedy, MC-greedy and PAC-greedy procedures behave similarly with the same error decay with respect to $n$ reaching a precision around $10^{-14}$ for (TC1) and $10^{-5}$ for (TC2). Let us mention that for (TC2), the function to approximate has a slow decay of its \done{Kolmogorov} $n$-width (see e.g. discussion in \cite[Section 4.3.2]{Blel2021}) which explains higher error for larger $n$. For the random approach, the selection of interpolation points is less optimal. For first iterates it behaves similarly as other approaches, but we observe that the approximation $u_n(\xi)$ is less accurate with $n$, from around $15$ for (TC1) and $10$ for (TC2) respectively.  However, despite no guarantee on the optimality of the returned parameter $\xi_n$, the PAC algorithm with asymptotic concentration inequality and especially the MC greedy algorithm, either with a single random evaluation of the error estimate ($K=1$), lead to very satisfactory results with an error close to the deterministic interpolation approach for both test cases.

\begin{figure}[h]
\centering
\includegraphics[scale=0.9]{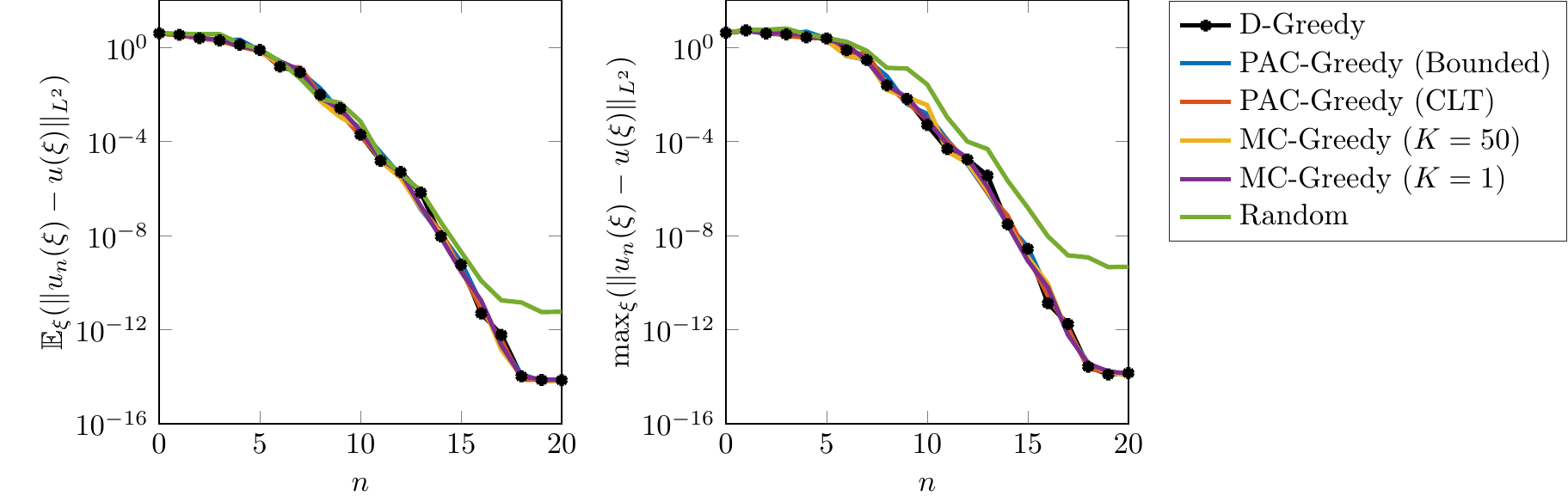}
\caption{(TC1) Evolution with respect to $n$, of the estimated expectation  and maximum of the approximation error in $L^2$-norm,  computed for $100$ instances of $\xi$, for  one realisation of the probabilistic greedy algorithms compared to the deterministic one.\label{T13:errormean}}
\end{figure}

\begin{figure}[h]
\centering
\includegraphics[scale=0.9]{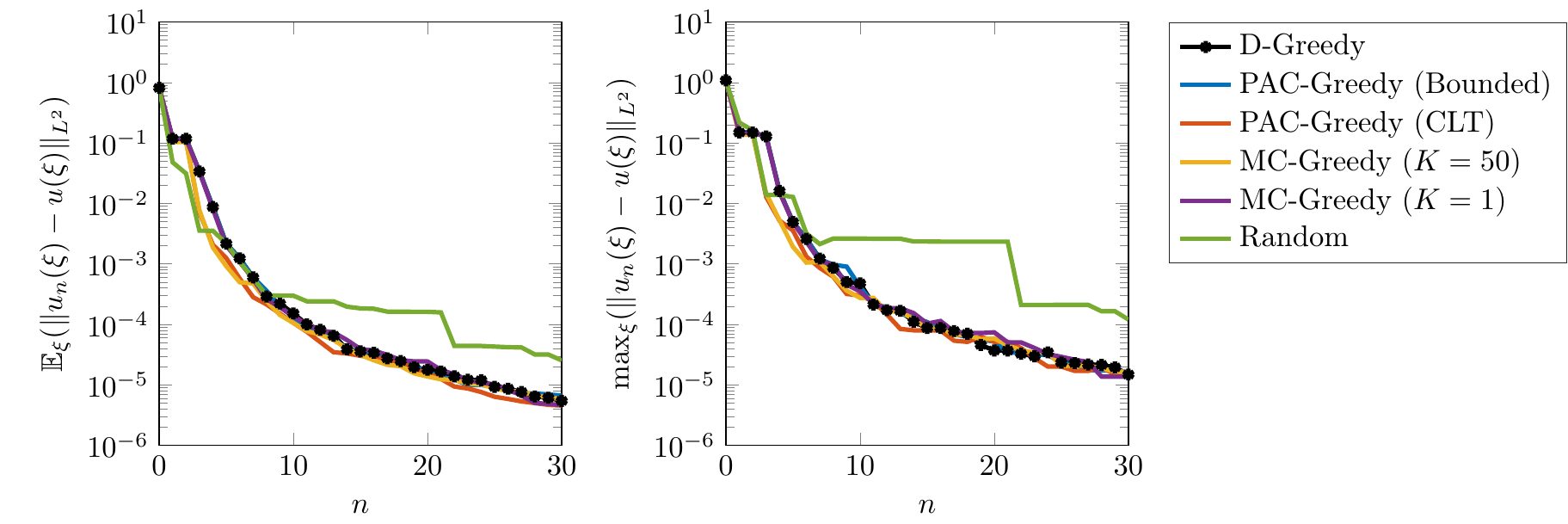}
\caption{(TC2) Evolution with respect to $n$, of the estimated expectation and maximum of the approximation error in $L^2$-norm,  computed for $100$ instances of $\xi$, for  one realisation of the probabilistic greedy algorithms compared to the deterministic one.\label{T12:errormean}}
\end{figure}

\paragraph{Greedy procedure.} We now turn to the study of the greedy procedures used for the selection of the snapshots. Figures \ref{T1:greedy}-\ref{T2:greedy} represent the error estimate $\Delta( u_{n-1}(\xi),\xi)$ as well as the number of samples $m_n(\xi)$ required during the greedy selection of $\xi_n$ for deterministic and probabilistic greedy algorithms based on bandit algorithms for (TC1) and (TC2). These curves corresponds to one realisation of the probabilistic algorithms.  \done{ First we observe that  parameters selected (indicated with the symbol $*$ on the curves) by probabilistic algorithms do not necessarily  coincide with the ones selected by D-Greedy. For MC-greedy, we observe a much higher variability of the error relatively to parameter $\xi$, for $K=1$. This is due to high variance of the estimate. However, in this simple example, even a crude MC estimate with $K=1$ allows to select a value of parameter that will make the error decrease significantly at the next iteration (see Figures \ref{T13:errormean} and  \ref{T12:errormean}).}  Second, as expected the number of samples $m_n(\xi)$ is adapted for both algorithms resulting in higher sampling in the region where it is likely to find maximum.  
Globally, we observe that PAC-greedy (CLT) works quite similarly as PAC-greedy (Bounded). But the  two approaches differ in terms of required number of samples.
Indeed, CLT based approach only requires around a maximum of $10-10^2$ samples whereas the one based on concentration inequalities requires between $10^3-10^5$ samples.

\begin{figure}[h]
\centering
\includegraphics[scale=1.15]{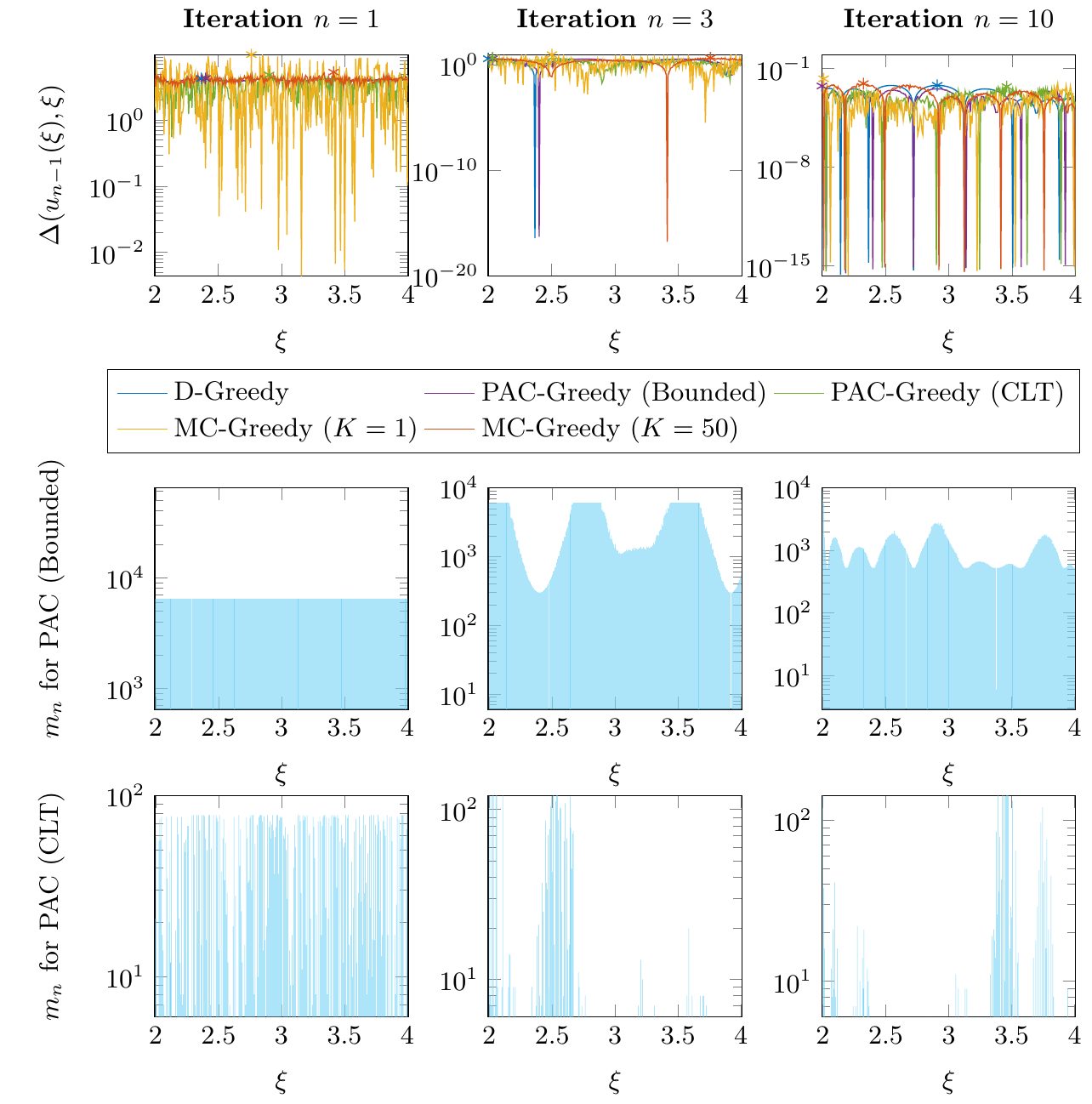}\\
\caption{(TC1) Evolution of the error during greedy procedures based on PAC bandit algorithms (top), together with required samples $m_n(\xi)$ for selecting $\xi_n \in \tilde \Xi$.\label{T1:greedy}}
\end{figure}

\begin{figure}[h]
\centering
\includegraphics[scale=1.15]{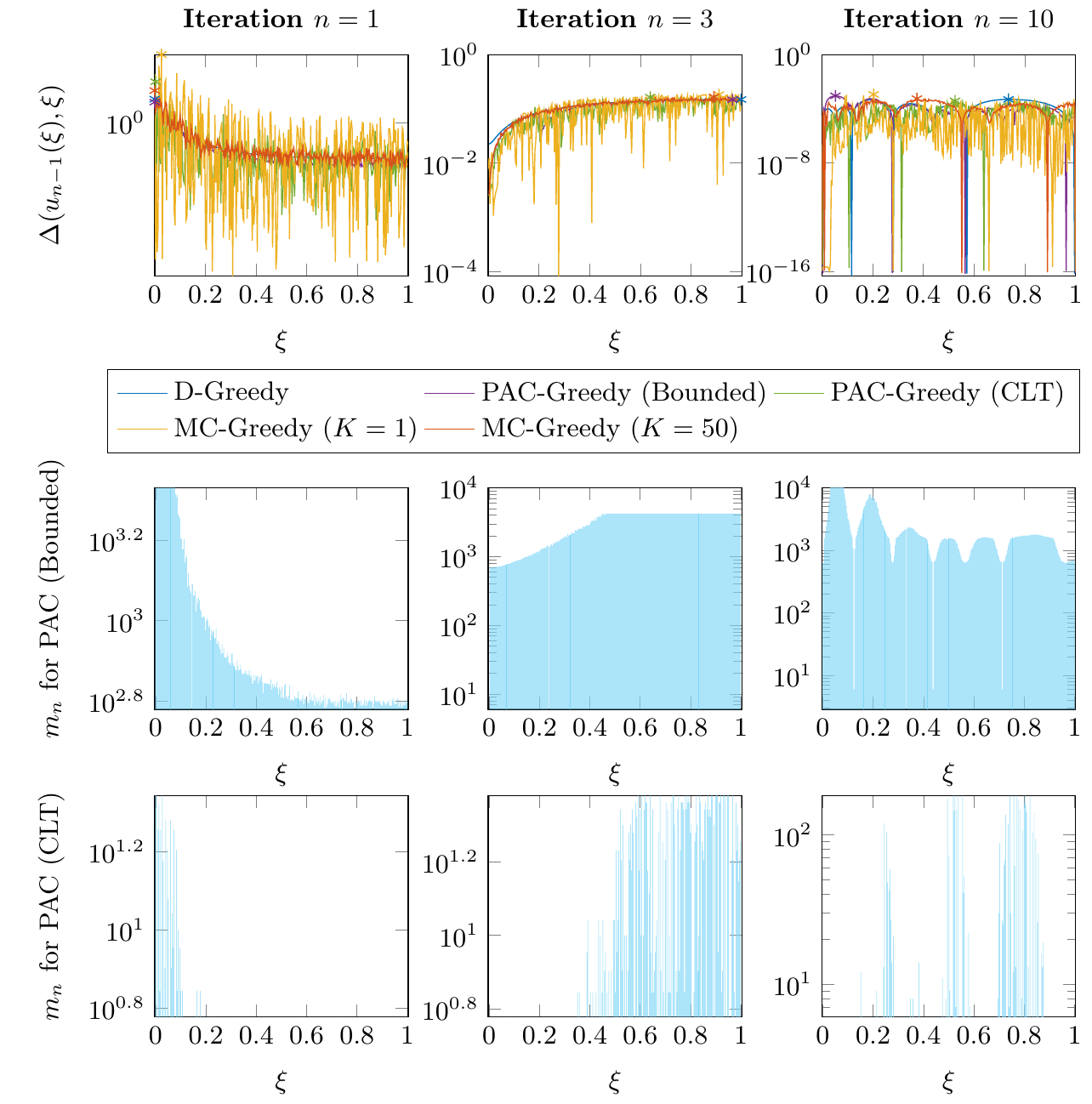}\\
\caption{(TC2) Evolution of the error during greedy procedures based on PAC bandit algorithms (top), together with required samples $m_n(\xi)$ for selecting $\xi_n \in \tilde \Xi$.\label{T2:greedy}}
\end{figure}

\paragraph{\done{Sampling complexity}.} 
\done{Now, we briefly discuss the sampling complexities of the different  methods, which are summarized   in Table \ref{tab:CPU1}. 
From Figures \ref{T13:errormean} and \ref{T12:errormean}, we see that MC- and PAC-greedy yield roughly the same level of error for a given dimension $n$. 
Therefore, the table provides the cumulated number of evaluations of the function for constructing the reduced basis of a given dimension $n$ for each test case.}

\begin{table}[h]
\centering
\done{
\begin{tabular}{l|c|c}
Method &  (TC1) for $n=20$ &(TC2) for $n=30$ \\ 
\hline \hline
D-Greedy &   $6~\times 10^7$  & $9~\times 10^6 $\\
MC-Greedy $K=1$ & $6~\times 10^3$  & $9~\times 10^3$\\
MC-Greedy $K=50$  & $3~\times 10^5$ & $4.5~\times 10^5$\\
PAC-Greedy (Bounded) &  $1.284326~\times 10^7$ &  $5.764507~\times 10^7$ \\ 
PAC-Greedy (CLT)  &   $7.507800~\times 10^4$ &$2.333380~\times 10^5$  \\
\end{tabular}
\caption{\done{Cumulative number of samples required by each algorithm, for (TC1) and (TC2), to construct reduced basis of dimension $n$. \label{tab:CPU1}}
}}
\end{table}

\done{We present as a reference the results of  D-greedy using a fine grid $\tilde D$ for numerical integration.  The most costly probabilistic algorithm is PAC-Greedy (Bounded). It requires a high number of samples for constructing non asymptotic confidence intervals, so that it does not outperform D-greedy. Let us mention that PAC-Greedy (CLT) is quite competitive with MC-Greedy with $K=50$ and underlines the interest of using some adaptive procedure for snapshot selection. However, the most interesting trade-off between efficiency and accuracy is MC-Greedy  with $K=1$.\\}

In regard to these observations, in the next section the MC-greedy approach with few samples $K$ for the error estimation will be retained in practice to reduce computational costs.

\subsection{Parameter-dependent PDE} \label{num:PDE}

Now, we focus on the solution of parameter-dependent PDEs, as introduced in Section  \ref{sec:probaPDE}.  Given $\Xi = [0.005,1]$, we seek $u(\xi), \xi \in \Xi $, solution on $D = ]0,1[$ of the following boundary problem
\begin{equation}
\begin{split}
&- {\cal A}(\xi) u(\xi) :=  -  a(\xi) u''(\xi) - b(\xi) u'(\xi) = g(\xi) \text{ in } ]0,1[, \\
& u(\xi) = f(\xi) \text{ at } x \in \{0,1\},
\end{split}
\label{eq:EDP1}
\end{equation}
for given boundary values $f(\xi) : \{0,1\} \to \RR$ and source term $g(\xi) : [0,1] \to \RR$. Moreover, we denote \done{$a(\xi) \in (0,+\infty)$} and $b(\xi) \in \RR$ the diffusion and advection coefficients respectively.  We assume that  \eqref{eq:EDP1} admits a unique solution in ${\cal C}^2([0,1])$ whose probabilistic representation is given by
$$
u(x,\xi) = \Exp(F(x,X^{x,\xi},\xi)) :=  \Exp \left(  f(X^{x,\xi}_{\tau^{x,\xi}}) + \int_0^{\tau^{x,\xi}}  g(X^{x,\xi}_t,\xi) dt \right).
$$
The associated parameter-dependent stopped diffusion process $X^{x,\xi}$ is solution of 
\begin{equation}
\label{eq:EDS1}
dX_t^{x,\xi} = b(\xi) dt + \sqrt{2a(\xi)} dW_t, \qquad X_0^{x,\xi} = x.
\end{equation}
In the following we take $a(\xi) = \xi$ and $b(\xi) = -10$.
The source term as well as Dirichlet boundary conditions are set such that the exact solution to Equation \eqref{eq:EDP1} is  
\begin{equation}
u(x,\xi) = \dfrac{\exp(x/\xi)-1}{\exp(1/\xi)-1}\cdot
\label{eq:1Dexact}
\end{equation}

\subsubsection{Compared procedures}

In what follows, we test the probabilistic RBM discussed in Section \ref{sec:probaPDE}. Since the exact solution is known, we use it for the snapshots. The projection $u_n$ is obtained from evaluations of the exact solution through interpolation ({\bf Interp}) using  magic points or a least-squares ({\bf LS}) approximation using a set of points $\tilde D$ in $D$, with $\# \tilde D \ge n$. It is also compared to the minimal residual based ({\bf MinRes}) approximation defined by
\begin{align}
\label{eq:MinRes}
u_n(\xi) = \arg \min_{v \in V_n} &\Big(\sum_{x_i \in \tilde D} \vert {\cal A}(x_i,\xi) v(x_i,\xi) +g(x_i,\xi) \vert ^2 \nonumber \\
& +  \vert  u(0,\xi) - v(0,\xi)\vert ^2 +  \vert u(1,\xi) - v(1,\xi)\vert ^2\Big).
\end{align} 
During the offline stage, we consider different greedy algorithms  for the generation of the reduced spaces. First, we perform the proposed probabilistic greedy  Algorithm \ref{ALGO2} for the construction of the reduced space $V_n$  using MC estimates with $Z_n(\xi)$ defined as in Theorem \ref{thm:errrorprobarepresentationell}. 
This approach is compared to an alternative RBM in deterministic setting. Since $u(\xi)$ is implicitly known through the boundary value problem \eqref{eq:EDP1}, the greedy selection of $V_n$ is performed using Algorithm  \ref{ALGO1} where $\Delta(u_n(\xi),\xi)$ is an estimate (using \done{trapezoidal integration rule}) of the residual based error estimate 
$\| {\cal A}(\xi) u_n(\xi) +g(\xi) \|^2_{L^2(D)}$ during the offline stage.   \\

In the presented results, the residual based  greedy algorithm is referred to as {\bf Residual}. The MC estimate using Feynman-Kac representation  is named {\bf FK-MC}. These approaches are also compared to a naive one named {\bf Random} in which the parameters $\xi_1, \dots, \xi_n$ that define  $V_n$ are chosen at random.

\subsubsection{Numerical results} \label{sec:numrespdes}

For the numerical experiments, the training set $\tilde \Xi$ is made of 200 samples from a log uniform distribution over  $\Xi$. This distribution is chosen as the solution strongly varies with respect to the viscosity $\xi$, in particular we want to  reach small viscosities. Realizations of $Z_n(\xi)$, given by \eqref{eq:approxFK}, are computed using $M=500$ realizations of the approximate stochastic diffusion process solution of Equation \eqref{eq:EDS1}, obtained by Euler-Maruyama scheme (see Appendix \ref{probnumu}) with $\Delta t = 10^{-3}$. For computing $u_n(\xi)$,  magic points are used for interpolation while for LS and MinRes approaches, we choose for $\tilde D$ a regular grid of $100$ points in $D$. Here, greedy algorithms are stopped  when $n=30$. The MC-FK greedy algorithm is performed for $K \in \{1,10\}$.\\

Figure \ref{T4:error} represents the estimated expectation and maximum, with $\xi$,  of the approximation error for the compared procedures, with respect to $n$. The  obtained results  underline the importance of both projection and  reduced basis construction.
When MinRes method is used, the approximation is less accurate than for
other deterministic approaches (dashed yellow curve). \done{Especially, when it is compared to approaches using a residual based error estimate (blue curves)  with interpolation or LS approximation. In that case, the mean approximation error is of order $10^{-11}$ for MinRes against $10^{-15}$ for the latter (for the maximum error, we have $10^{-10}$ against $10^{-14}$)}.  Second, let us comment the impact of the probabilistic reduced basis selection. As for function approximation, picking at random the reduced basis for the considered problem is far from optimal since the error expectation tends to stagnate around $10^{-10}$ for $n \ge 20$ (around $10^{-9}$ for the maximum). However, when considering residual based or FK-based error estimates (even with $K=1$),  with interpolation or least-square for projection, the error behaves quite similarly and reaches $10^{-15}$ for $n=30$. This shows that the proposed probabilistic based error procedure performs well in practice.  

\begin{figure}[h]
\centering
\includegraphics[scale=0.75]{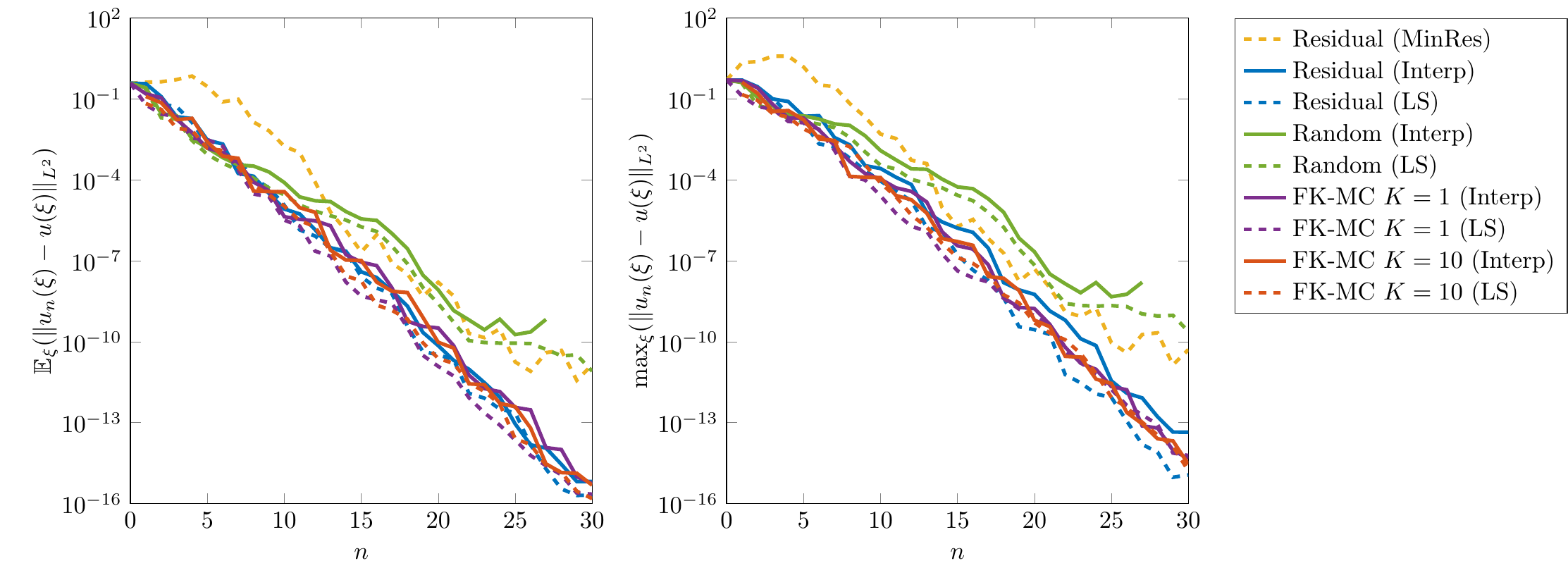}
\caption{Parameter-dependent equation: evolution with respect to $n$, of the estimated expectation and maximum of the approximation error in $L^2$-norm,  computed for $100$ instances of $\xi$, for  one realisation of the probabilistic greedy algorithms compared to the deterministic one.\label{T4:error}}
\end{figure}

\section{Conclusion} 

In this work we have proposed a probabilistic greedy algorithm for the approximation of a family of parameter-dependent functions  for which we only have access to (noisy) pointwise evaluations. It relies on an error indicator that can be written as an expectation of some parameter-dependent random variable.
Different variants of this algorithm have been proposed using either naive MC estimates or PAC bandit algorithms, the latter leading to a weak greedy algorithm with high probability. Several test cases have demonstrated the performances of the proposed procedures. \\

For parameter-dependent PDEs whose solution admits a probabilistic representation, through the Feynman-Kac formula, such an algorithm can be embedded within a probabilistic RBM using only (noisy) pointwise evaluations. Numerical results have shown the main relevance of considering Feynman-Kac error based estimate for greedy basis selection. We have also illustrated the influence of the projection during online and offline step.   \done{Indeed, we observed in Section \ref{sec:numrespdes} that interpolation or least-square projection (using pointwise evaluations of the solution) clearly outperform MinRes projection.} Following the discussion of Section \ref{sec:algoimplementation}, using a sequential procedure as proposed in \cite{Billaud-Friess2020May,Gobet2004Dec,Gobet2006} should be an interesting alternative to avoid limitations of residual based projections. However, further work should be conducted to provide a projection with  controlled error and at low cost, which is crucial for efficient model reduction.
\\

\done{The purpose of our simple numerical experiments was to illustrate the behaviour of our probabilistic greedy algorithms.  The application to more complex problems in higher dimension will be the focus of future works.}


\appendix

\section{Adaptive bandit algorithms} \label{A:bandit} 

We present an adaptive bandit algorithm to find a PAC maximum in relative precision of $\Exp(Z(\xi))$ over the discrete set $\tilde \Xi$. Here $\{Z(\xi) : \xi \in \tilde \Xi\}$ is a finite collection of random variables  satisfying $\done{\Exp(Z(\xi))} \neq 0$, defined on the probability space $(\Omega,{\cal F}, \Prob)$. After introducing some required notation, we present a practical adaptive bandit algorithm which returns a PAC maximum in relative precision for \eqref{eq:opt} when assuming suitable assumptions on the distribution of $Z(\xi)$. 

\subsection{Notations and assumptions}

 We denote by $\overline{Z(\xi)}_m$ the empirical mean of $Z(\xi)$ and $\overline{V(\xi)}_m$ its empirical variance, respectively defined by
\begin{equation}
\overline{Z(\xi)}_m = \frac{1}{m} \sum_{k=1}^{m} Z(\xi)_k \quad \text{and} \quad \overline{V(\xi)}_{m}  = \frac{1}{{m}} \sum_{k=1}^{m} \left( Z(\xi)_k - \overline{Z(\xi)}_m\right)^2,
\label{eq:empiricalestimates}
\end{equation} 
where $Z(\xi)_1 , \dots, Z(\xi)_m$ are $m$ independent  copies of $Z(\xi) $.
Moreover, the random variable $Z(\xi)$ is assumed to satisfy the following concentration inequality  
\begin{equation}
\Prob \left( \vert \overline{Z(\xi)}_m - \done{\Exp(Z(\xi))} \vert \le c(m,x,\xi) \right) \ge 1 - x,
\label{eq:concentration}
\end{equation}
for each $\xi \in \tilde \Xi$, $0 \le x \le 1$ and $m \ge 1$. The bound $c(m,x,\xi) $ depends on the probability distribution of $Z(\xi)$.\\
\begin{remark}
\label{rk:boundedness}
In view of numerical  experiments, we can consider standard concentration inequalities for sub-Gaussian or bounded random variables, see e.g. \cite[Section 2]{Billaud22Jun}. In particular, if for any $\xi \in \tilde \Xi$, there exists $a(\xi),b(\xi) \in \RR$ such that almost surely we have $a(\xi)\le Z(\xi) \le b(\xi)$, then \eqref{eq:concentration} holds with
$$
c(m,x,\xi) =\sqrt{\frac{2 \overline{V(\xi)}_{m} \log(3/x)}{{m}}} + \frac{3 \left( b(\xi) - a(\xi) \right) \log(3/x)}{{m}}.
$$
\end{remark}

An alternative to \eqref{eq:concentration} is to rely on asymptotic confidence intervals for   $\mathbb{E}(Z(\xi))$ based on limit theorems of the form 
\begin{equation}
\Prob \left( \vert  \overline{Z(\xi)}_m - \done{\Exp(Z(\xi))} \vert  \le c(m,x,\xi) \right) \to 1 - x, \text{ as $m \to \infty$.}
\label{eq:concentrationasympt}
\end{equation}
For example, for second order random variables, the central limit theorem provides such a property with   
$$
c(m,x,\xi) = \gamma_{x}\sqrt{ \frac{ \overline{V(\xi)}_{m} }{m}}, 
$$
and $\gamma_x$ the $(x/2)$-quantile of the normal distribution. 

\subsection{Algorithm} \label{app:alg}

Now, let us define a sequence $(d_m)_{m \ge 1} \subset  (0,1)^{\mathbb{N}}$, independent from $\xi$, and such that $\sum_{m \ge 1} d_m < \infty$. 
Then we introduce $c_{\xi,m} = c(m,d_m,\xi)$, and $\beta^{\pm}_{\xi,m(\xi)} = \overline{Z(\xi)}_{m(\xi)} \pm c_{\xi,{m(\xi)}}$.
Note that the concentration inequality \eqref{eq:concentration} implies that $[ \beta^-_{m(\xi)}(\xi), \beta^+_{m(\xi)}(\xi)]$ is a confidence interval for $\Exp(Z(\xi)) $ with level $1- d_{m(\xi)}$.\\

Letting $s(\xi) := \text{sign} ( \overline{Z(\xi)}_{m(\xi)} )$ and $
\varepsilon_{\xi,m(\xi)} =
 \left\{
 \begin{array}{ll}
       \frac{c_{\xi,{m(\xi)}}}{ \vert \overline{Z(\xi)}_{m(\xi)} \vert }  & \text{if } \overline{Z(\xi)}_{m(\xi)} \neq 0, \\
        +\infty& \text{otherwise.}
    \end{array}
\right.
$, we define the following estimate for $\done{\Exp(Z(\xi))}$ given by
\begin{equation}
\done{\hat{\Exp}_{{m(\xi)}}(Z(\xi))} = \left\{
    \begin{array}{ll}
        \overline{Z(\xi)}_{{m(\xi)}} - \varepsilon_{\xi,m(\xi)} ~ s
        (\xi) c_{\xi,{m(\xi)}} & \mbox{if } \varepsilon_{\xi,m(\xi)} < 1, \\
        \overline{Z(\xi)}_{m(\xi)} & \mbox{otherwise}.
    \end{array}
\right.
\label{eq:newestimate}
\end{equation}
Then, the adaptive bandit algorithm proposed in \cite{Billaud22Jun} is as follows. 
\begin{algorithm}[Adaptive bandit algorithm]
\label{bandit}~
\begin{algorithmic}[1]
\STATE Let $\varepsilon,\lambda \in (0,1)$ and $K\in \mathbb{N}$. Set $\ell=0$, $\Xi_0 = \tilde \Xi$, $m(\xi) = K$ and $\varepsilon_{\xi,m(\xi)} = +\infty$ for all $\xi \in \Xi$.
\WHILE{$\# \Xi_\ell > 1$ \textbf{and} $\underset{\xi \in \Xi_\ell}{\max} ~ \varepsilon_{\xi,m(\xi)} > \frac{\varepsilon}{2+\varepsilon}$}
\FORALL{$\xi \in \Xi_\ell$} 
\STATE Sample $Z(\xi)$, increment $m(\xi)$ and update $\varepsilon_{\xi,m(\xi)}$. 
\STATE Compute the estimate $\done{\hat{\Exp}_{{m(\xi)}}(Z(\xi))}$ using \eqref{eq:newestimate}.
\ENDFOR
\STATE Increment $\ell$ and put in $\Xi_\ell$ every $\xi \in \tilde \Xi$ such that
\begin{equation}
\beta^{+}_{\xi,m(\xi)} \ge \underset{\nu \in \tilde \Xi}{\max} ~ \beta^{-}_{\nu,m(\nu)} .
\label{eq:enrichingcondition}
\end{equation} 
\ENDWHILE
\STATE Select $\hat \xi$ such that 
$$
\hat{\xi} \in \underset{\xi \in \Xi_\ell}{\arg \max} ~ \done{\hat{\Exp}_{{m(\xi)}}(Z(\xi))}. 
$$
\end{algorithmic}
\end{algorithm} 

At each step $\ell$, the principle of Algorithm \ref{bandit} is to successively increase the number of samples $m(\xi)$ of the random variables $Z(\xi)$ in the subset  $ \Xi_\ell \subset \tilde \Xi$, obtained using confidence intervals $[ \beta^-_{m(\xi)}(\xi), \beta^+_{m(\xi)}(\xi)]$ of $\Exp(Z(\xi))$ according to \eqref{eq:enrichingcondition}. The idea behind is to use those confidence intervals to find regions of $\tilde \Xi$ where one has a high chance to  find a maximum. Then, $\hat \xi$ is returned as a maximizer over $\Xi_\ell$ of the expectation estimate defined by \eqref{eq:newestimate}. Under suitable assumptions, and when using certified non-asymptotic concentration inequalities, this algorithm returns a PAC maximum in relative precision of $\Exp(Z(\xi))$ over $\tilde \Xi$, as stated in \cite[Proposition 3.2]{Billaud22Jun}, recalled below. 

\begin{proposition} 
Let $\varepsilon,\lambda \in (0,1)$ and $(d_m)_{m \ge 1}\subset (0,1)$ be a sequence that satisfies
\begin{equation}
\sum_{m\ge 1}  d_m \le \frac{\lambda}{\# \tilde \Xi} \quad \text{and} \quad \log(1/d_m)/m \underset{m \rightarrow + \infty}{\rightarrow} 0,
\label{eq:conditionsurdmcomplete}
\end{equation}
and assume that $c$ satisfies \eqref{eq:concentration}.
If for all $\xi$ in $\Xi$, $Z(\xi)$ is a random variable  with $\done{\Exp(Z(\xi))} \neq 0$, then  Algorithm \ref{bandit} almost surely stops and returns a PAC maximum in relative precision, i.e. $\hat \xi = \mathrm{PAC}_{\lambda,\varepsilon}(Z,\tilde \Xi).$
\label{prop:banditresult}
\end{proposition}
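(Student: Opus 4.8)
The plan is to condition on a single high-probability ``good event'' $G$ on which all empirical confidence intervals are simultaneously valid, to establish almost sure termination separately, and then to verify the relative-precision bound pathwise on $G$.

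First I would let $G$ be the event that $\vert \overline{Z(\xi)}_m - \Exp(Z(\xi)) \vert \le c_{\xi,m}$ holds \emph{for all} $\xi \in \tilde\Xi$ and \emph{all} $m \ge 1$ simultaneously. By the concentration inequality \eqref{eq:concentration} taken with $x = d_m$, each event $\{\vert \overline{Z(\xi)}_m - \Exp(Z(\xi)) \vert > c_{\xi,m}\}$ has probability at most $d_m$, so a union bound over the countably many pairs $(\xi,m)$ gives $\Prob(\overline G) \le \#\tilde\Xi \sum_{m\ge1} d_m \le \lambda$ by the first part of \eqref{eq:conditionsurdmcomplete}; hence $\Prob(G)\ge 1-\lambda$. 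The crucial feature is that this ``anytime'' bound is uniform in $m$, so it remains valid for the random, data-dependent sample counts $m(\xi)$ actually produced by the algorithm.

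For almost sure termination I would argue by contradiction: if the while loop never stops, then each iteration samples every $\xi\in\Xi_\ell$ once more, and since $\tilde\Xi$ is finite some fixed parameter $\xi$ with $\varepsilon_{\xi,m(\xi)}>\varepsilon/(2+\varepsilon)$ must lie in $\Xi_\ell$ for infinitely many $\ell$, forcing $m(\xi)\to\infty$. By the strong law of large numbers $\overline{Z(\xi)}_{m}\to\Exp(Z(\xi))\neq0$, while the second condition in \eqref{eq:conditionsurdmcomplete}, namely $\log(1/d_m)/m\to0$, forces $c_{\xi,m}\to0$; together these give $\varepsilon_{\xi,m(\xi)}=c_{\xi,m(\xi)}/\vert\overline{Z(\xi)}_{m(\xi)}\vert\to0$, contradicting that it stays above the threshold. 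This step holds on a probability-one event independent of $G$.

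It remains to show that on $G$ the returned $\hat\xi$ satisfies \eqref{eq:PACsol}. On $G$, with $\xi^\star\in\arg\max_\xi\Exp(Z(\xi))$, the chain $\beta^{+}_{\xi^\star,m(\xi^\star)}\ge\Exp(Z(\xi^\star))\ge\Exp(Z(\nu))\ge\beta^{-}_{\nu,m(\nu)}$ shows that $\xi^\star$ always meets the enriching condition \eqref{eq:enrichingcondition}, so $\xi^\star\in\Xi_\ell$ at every step; if the loop stops with $\#\Xi_\ell=1$ then $\hat\xi=\xi^\star$ and the claim is trivial. Otherwise, at termination every active $\xi$ (in particular $\hat\xi$ and $\xi^\star$) satisfies $\varepsilon_{\xi,m(\xi)}\le\delta:=\varepsilon/(2+\varepsilon)<1$, so \eqref{eq:newestimate} reduces to $\hat\Exp_{m(\xi)}(Z(\xi))=\overline{Z(\xi)}_{m(\xi)}(1-\varepsilon_{\xi,m(\xi)}^2)$; combining this identity with the confidence interval valid on $G$ yields the two-sided relative bound $(1-\varepsilon_{\xi,m(\xi)})\Exp(Z(\xi))\le\hat\Exp_{m(\xi)}(Z(\xi))\le(1+\varepsilon_{\xi,m(\xi)})\Exp(Z(\xi))$. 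Using $\hat\Exp_{m(\hat\xi)}(Z(\hat\xi))\ge\hat\Exp_{m(\xi^\star)}(Z(\xi^\star))$ from the final $\arg\max$, together with $\varepsilon_{\hat\xi,m(\hat\xi)},\varepsilon_{\xi^\star,m(\xi^\star)}\le\delta$, I would chain the inequalities to obtain $\Exp(Z(\hat\xi))\ge\frac{1-\delta}{1+\delta}\Exp(Z(\xi^\star))=\frac{1}{1+\varepsilon}\Exp(Z(\xi^\star))\ge(1-\varepsilon)\Exp(Z(\xi^\star))$, which is exactly \eqref{eq:PACsol}; intersecting with $\Prob(G)\ge1-\lambda$ concludes. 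The main obstacles are the careful anytime union bound (uniformity over the random $m(\xi)$) and the algebra linking the shrunken estimator \eqref{eq:newestimate}, the threshold $\varepsilon/(2+\varepsilon)$, and the target precision $\varepsilon$; the sign bookkeeping needed for general $\Exp(Z(\xi))\neq0$, handled through $s(\xi)$, is routine once the positive-mean case is in place.
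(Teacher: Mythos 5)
The paper does not actually prove this proposition: it is recalled verbatim from \cite[Proposition 3.2]{Billaud22Jun} and the proof is delegated to that reference, so there is no in-paper argument to compare against. Your blind proof is correct and follows exactly the route one expects (and that the cited reference takes): a single anytime good event $G$ with $\Prob(\overline G)\ge 1-\lambda$ via the union bound over $\xi$ and $m$, which is what makes the guarantee survive the data-dependent sample counts $m(\xi)$; a separate almost-sure termination argument; persistence of $\xi^\star$ in $\Xi_\ell$ on $G$; and the identity $\hat{\Exp}_{m}(Z(\xi))=\overline{Z(\xi)}_m(1-\varepsilon_{\xi,m}^2)$ combined with the two-sided relative bound and the algebra $\frac{1-\delta}{1+\delta}=\frac{1}{1+\varepsilon}\ge 1-\varepsilon$ for $\delta=\varepsilon/(2+\varepsilon)$, all of which check out. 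The only point worth flagging is in the termination step: the hypothesis $\log(1/d_m)/m\to 0$ forces $c(m,d_m,\xi)\to 0$ only for concentration bounds of the specific forms considered (e.g.\ the empirical Bernstein bound of Remark \ref{rk:boundedness}), not for an arbitrary abstract $c$ satisfying \eqref{eq:concentration}; this implicit assumption on the shape of $c$ is present in the statement as given here as well, so your proof is no weaker than the proposition it establishes, but you should make that dependence explicit if you want the argument to be fully self-contained.
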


\begin{remark} 
\label{rk:dm}
In practice a possible choice for the sequence $(d_m)_{m \ge 1}$ is to take
\begin{equation}
d_m = \delta c m^{-p} \text{ with } \delta = \frac{\lambda}{\# \tilde \Xi} \text{ and } c = \frac{p-1}{p}, \label{assumption:dmforme} 
\end{equation}
which satisfies  \eqref{eq:conditionsurdmcomplete}  for any $p>1$.
\end{remark}
In general, confidence intervals based on asymptotic theorems are much smaller  than those obtained with non-asymptotic concentration inequalities, and yield a selection of much smaller sets $\Xi_\ell$ of candidate maximizers, hence a much faster convergence of the algorithm. However, when using asymptotic theorems, we can not guarantee to obtain a PAC maximizer.

\section{Probabilistic approximation of the solution of a PDE} \label{probnumu}

%

Here we discuss the numerical computation of an estimate of $u(x)$ for any $x \in \bar D$. To that goal, we use a suitable integration scheme to get an approximation  of the diffusion process $X^x$ and  a MC method to evaluate the expectation in formula \eqref{eq:FK}. \\

An approximation of the diffusion process is obtained using a Euler-Maruyama scheme. More precisely, setting  $t_n = n\Delta t$, $n\in \mathbb{N}$, $X^{x}$ is approximated by a piecewise constant process $X^{x, \Delta t}$, where $X^{x, \Delta t}_t = X^{x, \Delta t}_n$ for $ t \in [t_n,t_{n+1}[$ and  
\begin{equation}
\begin{split}
X^{x, \Delta t}_{n+1} & = X^{x, \Delta t}_n + \Delta t ~ b(X^{x, \Delta t}_n) + \sigma(X^{x, \Delta t}_n) ~ \Delta W_n, \\
X^{x, \Delta t}_0 & = x,
\end{split}
\label{eq:Eulerscheme}
\end{equation}
where $\Delta W_n = W_{n+1} -  W_n$ is an increment of the standard Brownian motion. \\

Numerical computation of $u(x)$ for all $x \in \bar D$ requires the computation  of a stopped process $X^{x,\Delta t}$ at time  $\tau^{x,\Delta t}$, an estimation of the first exit time of $D$. Here, we consider the simplest way to define this discrete exit time
\begin{equation}
\tau^{x,\Delta t} = \min \left\{ t_n > 0 ~ : ~ X^{x,\Delta t}_{t_n} \notin {D}  \right\}.
\label{eq:discreteexittime}
\end{equation}
Such a discretization choice may lead to over-estimation of the exit time with an error in $O(\Delta t^{1/2})$. More sophisticated  approaches are possible to  improve the order of convergence, as Brownian bridge, boundary shifting or Walk On Sphere (WOS) methods, see e.g.,  \cite[Chapter 6]{Gobet2016}. These are not considered here.\\

Letting $\{ X^{x, \Delta t}(\omega_m) \}_{m=1}^M$ be $M$ independent samples of $X^{x, \Delta t}$, we obtain a  MC estimate noted $u_{\Delta t,M}(x)$ for $u(x)$ defined as
\begin{equation}
\begin{split}
 u_{\Delta t,M}(x) & = \dfrac{1}{M} \sum_{m=1}^M \bigg[ f(X^{x,\Delta t}_{\tau^{x,\Delta t}}(\omega_m)) + \int_0^{\tau^{x,\Delta t}}  g(X^{x,\Delta t}_t(\omega_m)) dt \bigg].
\end{split}
\label{eq:approxFK}
\end{equation}


\bibliography{ref}

\end{document}